\documentclass[reqno]{amsart}

\usepackage[english]{babel}
\usepackage[all]{xy}
\usepackage{amssymb}
\usepackage{amsmath}
\usepackage{amsfonts}
\usepackage{url}

\newtheorem{theorem}{Theorem}[section]
\newtheorem{prop}[theorem]{Proposition}
\newtheorem{lemma}[theorem]{Lemma}

\theoremstyle{definition}

\newtheorem{rem}[theorem]{Remark}

\newcommand{\lra}{\longrightarrow}

\newcommand{\ie}{{\it i.e.}\ }

\newcommand{\IC}{\mathbb{C}}

\newcommand{\IG}{\mathbb{G}}
\newcommand{\IN}{\mathbb{N}}
\newcommand{\IP}{\mathbb{P}}
\newcommand{\IQ}{\mathbb{Q}}
\newcommand{\IR}{\mathbb{R}}
\newcommand{\IZ}{\mathbb{Z}}

\newcommand{\cA}{\mathcal{A}}
\newcommand{\cD}{\mathcal{D}}

\newcommand{\cO}{\mathcal{O}}

\newcommand{\coloneqq}{:=}

\DeclareMathOperator{\rank}{rank}
\newcommand{\Aut}{{\rm Aut}}
\newcommand{\Dih}{{\rm Dih}}
\newcommand{\id}{{\rm id}}
\newcommand{\NS}{{\rm NS}}
\newcommand{\Trans}{{\rm Trans}}
\newcommand{\Ker}{{\rm Ker}}
\newcommand{\Pic}{{\rm Pic}}
\newcommand{\Grass}{{\rm Grass}}

\newcommand{\set}[1]{\left\{ #1 \right\}}
\newcommand{\pa}[1]{\left( #1 \right)}

\begin{document}

\date{\today}

\title[]{The automorphism group of the Hilbert scheme of two points on a generic projective K3 surface}
\author[Boissi\`ere, Cattaneo, Nieper-Wisskirchen, Sarti]{Samuel Boissi\`ere, Andrea Cattaneo, Marc Nieper-Wisskirchen and Alessandra Sarti}

\address{Samuel Boissi\`ere, Universit\'e de Poitiers, 
Laboratoire de Math\'ematiques et Applications, 
 T\'el\'eport 2 
Boulevard Marie et Pierre Curie
 BP 30179,
86962 Futuroscope Chasseneuil Cedex, France}
\email{samuel.boissiere@math.univ-poitiers.fr}
\urladdr{http://www-math.sp2mi.univ-poitiers.fr/$\sim$sboissie/}

\address{Andrea Cattaneo, Dipartimento di Matematica, Universit\`a di Parma, Parco Area delle Scienze 53/A, 43124, Parma, Italy} 
\email{andrea.cattaneo@unipr.it}
%\urladdr{}

\address{Marc Nieper-Wi{\ss}kirchen, Lehrstuhl f\"ur Algebra und Zahlentheorie, Universit\"ats\-stra{\ss}e~14, D-86159 Augsburg}
\email{marc.nieper-wisskirchen@math.uni-augsburg.de}
\urladdr{http://www.math.uni-augsburg.de/alg/}

\address{Alessandra Sarti, Universit\'e de Poitiers, 
Laboratoire de Math\'ematiques et Applications, 
 T\'el\'eport 2 
Boulevard Marie et Pierre Curie
 BP 30179,
86962 Futuroscope Chasseneuil Cedex, France}
\email{sarti@math.univ-poitiers.fr}
\urladdr{http://www-math.sp2mi.univ-poitiers.fr/$\sim$sarti/}

\keywords{Irreducible holomorphic symplectic manifolds, non-symplectic automorphisms, Pell's equation, ample cone}

\begin{abstract}
 We determine the automorphism group of the Hilbert scheme of two points on a generic projective K3 surface of any polarization.
We obtain in particular new examples of Hilbert schemes of points having non-natural non-symplectic automorphisms. The existence
of these automorphisms depends on solutions of Pell's equation.
\end{abstract}

\maketitle

\section{Introduction}

A classical result in the theory of surfaces is that any complex K3 surface $S$ which contains an ample divisor 
$D$ with $D^2=2$ is a double cover of the plane ramified over a smooth sextic curve (see~\cite{saintdonat}); in particular,
the covering involution is an anti-symplectic automorphism whose induced action on $H^2(S,\IZ)$ is the reflection
in the span of~$D$. O'Grady~\cite{ogrady1,ogrady2} has given conjectural generalizations
of this statement to  higher dimensional holomorphic symplectic manifolds $X$ which are deformations of the Hilbert scheme of $n$ points~$S^{[n]}$ on a K3 surface $S$ and which are polarized by an ample divisor $D$ of square $2$
with respect to the Beauville--Bogomolov quadratic form on $H^2(X,\IZ)$. There is a moduli space parametrizing
degree $2$ polarized irreducible holomorphic symplectic manifolds ($X,D)$ with $X$ deformation of~$S^{[n]}$. 
The ``L Conjecture'' of O'Grady~\cite{ogrady1} states that there is an open dense subset of this moduli space
which parametrizes pairs $(X,D)$ such that the linear system~$|D|$ is base-point-free and induces a regular
map $X\to |D|^\ast$ which is of degree $2$ onto its image $Y$. In particular, the covering involution is
non-symplectic and its action on $H^2(X,\IZ)$ is the reflection in the span of $D$. This conjecture is 
particularly interesting in the case $n=2$ where O'Grady~\cite{ogrady2} proves that up to deformation 
there are two possibilities: either $X$ is a double cover of an EPW sextic or $X$ is birational to a hypersurface
of degree at most $12$. It is conjectured that the second case can not happen.

The non-symplectic involutions on deformations of $S^{[2]}$ have been classified by Beauville~\cite{BeauvilleInvol}
by means of some numerical invariants of the fixed surface and by Ohashi--Wandel~\cite{OW}, Boissi\`ere--Camere--Sarti~\cite{BoissiereCamereSarti}
and Mongardi--Wandel~\cite{MW} by means of the properties of the invariant lattice and its orthogonal complement. New examples of non-symplectic involutions
on deformations of $S^{[2]}$ have thus been obtained, but not on $S^{[2]}$ itself. 

In this paper, we answer the original question without deformation: what are the automorphisms of $S^{[2]}$ itself? 
We study the generic case where $S^{[2]}$ has Picard number $2$, which is the mimimal possible rank. The surface $S$
is a generic algebraic K3 surface of Picard number one, its N\'eron--Severi group is 
generated by an ample divisor $H$ of self-intersection $H^2=2t$ with $t\geq 1$. If $t=1$ then $S$ is the double cover
of $\IP^2$ branched along a smooth sextic curve and we show in Proposition~\ref{prop: nonsympl inv} that in fact $\Aut(S^{[2]})$ is isomorphic to $\IZ/2\IZ$ and it is generated by the non-symplectic involution on $S^{[2]}$ induced by the covering involution on $S$. 
The main result of the paper (see Section~\ref{ss:main})  gives a complete description 
of the group of automorphisms of $S^{[2]}$ when $t\geq 2$:

\begin{theorem} Let $S$ be an algebraic K3 surface such 
that $\Pic(S)=\IZ H$ with $H^2=2t$, $t\geq 2$. Then $S^{[2]}$ admits a non-trivial automorphism
if and only if one of the following equivalent conditions is satisfied:
\begin{enumerate}
\item $t$ is not a square, Pell's equation $x^2-4ty^2=5$ has no solution and
  Pell's equation $x^2-ty^2=-1$ has a solution;
\item there exists an ample class $D \in \NS(S^{[2]})$ such that $D^2 = 2$.
\end{enumerate}
Moreover, if this is the case the class $D$ is unique, the automorphism is unique and it is a non-symplectic involution
whose action on $H^2(S^{[2]},\IZ)$ is the reflection in the span of $D$.
\end{theorem}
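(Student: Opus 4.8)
The plan is to reduce the question to lattice theory via Verbitsky's global Torelli theorem and Markman's description of the monodromy group, and then to combine this with the known structure of the ample cone of $S^{[2]}$ and with the arithmetic of Pell's equations. First I would record the relevant lattices: with respect to the Beauville--Bogomolov form, $H^2(S^{[2]},\IZ)=H^2(S,\IZ)\oplus\IZ\delta$ with $2\delta$ the class of the exceptional divisor of the Hilbert--Chow morphism and $\delta^2=-2$, so under our hypothesis $\NS(S^{[2]})=\IZ H\oplus\IZ\delta$ with $H^2=2t$, $\delta^2=-2$, $H\cdot\delta=0$, while the transcendental lattice of $S^{[2]}$ is $T(S)$. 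As $S$ is generic, $\mathrm{End}_{\mathrm{Hdg}}(T(S)_\IQ)=\IQ$, hence every Hodge isometry of $H^2(S^{[2]},\IZ)$ restricts to $\pm\id$ on $T(S)$ and to an isometry of $\NS(S^{[2]})$, and conversely an isometry $g$ of $\NS(S^{[2]})$ extends to a Hodge isometry of $H^2(S^{[2]},\IZ)$ --- together with $\varepsilon\,\id_{T(S)}$ for one sign $\varepsilon$ --- exactly when $g$ acts on the discriminant group $A_{\NS(S^{[2]})}$ as $\varepsilon\,\id$. Since $A_{H^2(S^{[2]},\IZ)}=\IZ/2\IZ$, Markman's theorem identifies $\mathrm{Mon}^2(S^{[2]})$ with the full group of orientation-preserving isometries, so by the Torelli theorem $\Aut(S^{[2]})$ is the group of orientation-preserving Hodge isometries preserving the ample cone. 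Moreover $f\mapsto f^\ast|_{\NS(S^{[2]})}$ is injective for $t\geq2$ (an automorphism trivial on $\NS(S^{[2]})$ acts by $\pm\id$ on $T(S)$, but $-\id$ does not glue with $\id_{\NS(S^{[2]})}$ to an isometry of $H^2(S^{[2]},\IZ)$, as $A_{\NS(S^{[2]})}=\IZ/2t\IZ\oplus\IZ/2\IZ$ is not $2$-torsion for $t\geq2$), and since the ample cone is a strictly convex plane cone with exactly two boundary rays, one of them the Hilbert--Chow ray $\IR_{\geq 0}H$ (the class $H$ being nef but not ample), $\Aut(S^{[2]})$ maps to the symmetric group on these two rays with trivial kernel, so $|\Aut(S^{[2]})|\leq2$.

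To prove that $(2)$ implies the existence of a non-trivial automorphism, suppose $D\in\NS(S^{[2]})$ is ample with $D^2=2$. The reflection $R_D\colon x\mapsto -x+(x\cdot D)\,D$ of $H^2(S^{[2]},\IZ)$ fixes $D$ and equals $-\id$ on $D^\perp$; it is therefore an orientation-preserving Hodge isometry (it restricts to $-\id$ on $T(S)$), hence a monodromy operator. It preserves the ample cone, because $\mathrm{Mon}^2(S^{[2]})$ permutes the chambers cut out in the positive cone by the wall divisors and $R_D$ fixes the ample class $D$, which lies in a unique chamber. Thus $R_D$ is induced by an automorphism, which is a non-symplectic involution (it acts as $-\id$ on $H^{2,0}\subset T(S)_\IC$) acting on cohomology by the reflection in the span of $D$; uniqueness of $D$ and of the automorphism follows from $|\Aut(S^{[2]})|\leq2$, since two distinct primitive ample classes of square $2$ would give two distinct non-trivial automorphisms.

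For the converse, assume $\Aut(S^{[2]})$ contains $f\neq\id$, so $g\coloneqq f^\ast|_{\NS(S^{[2]})}\neq\id$. As $g$ preserves the ample cone and its boundary ray $\IR_{\geq 0}H$ has $H^2=2t>0$, $g$ cannot fix both boundary rays (that would force $g=\id$), so it exchanges them; hence $g^2=\id$, and being an involution of an indefinite rank two lattice and different from $-\id$, it is the reflection $R_D$ in its fixed line, which meets the interior of the ample cone, so the primitive positive generator $D$ of that line is ample. Now $f^\ast=g\oplus(\varepsilon\,\id_{T(S)})$ is an isometry of $H^2(S^{[2]},\IZ)$, and $f$ is an involution by the injectivity above, so it cannot be symplectic --- $S^{[2]}$ has no symplectic involution when $\rho=2$, the coinvariant lattice of such an involution being $E_8(-2)$ --- whence $\varepsilon=-1$; thus $R_D$ acts as $-\id$ on $A_{\NS(S^{[2]})}$, which for the primitive class $D$ forces $D^2\mid2$, and since $\NS(S^{[2]})$ is even and $D$ is ample, $D^2=2$. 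This is $(2)$.

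It remains to prove $(1)\Leftrightarrow(2)$, which I expect to be the main obstacle. That $\NS(S^{[2]})$ represents $2$ is equivalent to solvability of $x^2-ty^2=-1$; such a class $D=xH+y\delta$ is automatically primitive, and after a choice of signs lies in the Hilbert--Chow half of the positive cone. Whether $D$ is ample is then governed by the explicit description of the nef cone of $S^{[2]}$ for $\Pic(S)=\IZ H$, $H^2=2t$ (from the work of Bayer--Hassett--Tschinkel and Bayer--Macr\`{\i} on Mori cones of varieties of $K3^{[n]}$-type): its second extremal ray is determined by the smallest solutions of the relevant Pell equations, it is isotropic exactly when $t$ is a square, and it is separated from the interior by a flopping wall exactly when $x^2-4ty^2=5$ has a solution. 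Excluding both alternatives leaves the nef cone equal to the $R_D$-symmetric cone spanned by $\IR_{\geq 0}H$ and $\IR_{\geq 0}R_D(H)$, with $D$ in its interior --- which is exactly condition $(1)$. The delicate point is the precise matching between the three arithmetic conditions of $(1)$ and these geometric alternatives, which I would handle by reducing everything to manipulations of $x^2-ty^2=-1$ and $x^2-4ty^2=5$ and tracking their fundamental solutions.
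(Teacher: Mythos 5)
Your overall strategy is the same as the paper's: inject $\Aut(S^{[2]})$ into $O(\NS(S^{[2]}))$, show that any non-trivial image is the reflection in an ample class of square $2$, use the extension-to-$H^2(S^{[2]},\IZ)$ criterion to force $D^2=2$, and invoke the Markman--Verbitsky Torelli theorem for existence. Several of your sub-arguments are pleasant variants of the paper's (the bound $|\Aut(S^{[2]})|\le 2$ via the action on the two boundary rays of the ample cone rather than via the explicit matrix classification of $O(\NS(S^{[2]}))$; uniqueness of $D$ deduced from that bound rather than from the multiplicative structure of the Pell solutions), and these parts are sound. One small imprecision: the condition for $R_D\oplus(-\id_{\Trans(S^{[2]})})$ to extend to the full lattice is that $R_D$ act as $-\id$ on the \emph{glue subgroup} of $A_{\NS(S^{[2]})}$, which here has index $2$ (the class $\delta/2$ is not glued to anything in $A_{\Trans(S^{[2]})}$); the conclusion $D^2\mid 2$ still follows, but only after the extra computation, which is essentially the content of the paper's Lemma on extending involutions.

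The genuine gap is exactly where you flag it: the equivalence of condition (1) with the ampleness of the square-$2$ class is asserted rather than proved, and it is the arithmetic heart of the theorem. Two facts are missing. First, when $P_t(-1)$ has minimal solution $(a,b)$ and $P_{4t}(5)$ has none, you must show that $D=bh-a\delta$ lies in the \emph{interior} of the Bayer--Macr\`i nef cone, whose second extremal ray is $\alpha h-t\beta\delta$ for $(\alpha,\beta)$ the minimal solution of $P_t(1)$; this requires the identity $\alpha=2a^2+1$, $\beta=2ab$ (the paper's Lemma~2.1, proved by comparing $z=a+b\sqrt t$ with the fundamental unit), after which the ampleness inequality $\alpha a<t\beta b$ reduces to $2a^3+a<2a^3+2a$ using $tb^2=a^2+1$. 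Second, in the converse direction you need that the existence of an ample square-$2$ class (equivalently, of a non-trivial automorphism) excludes the $P_{4t}(5)$ case of the nef-cone trichotomy: this holds because the reflection forces the second boundary ray to be $R_D(h)$, a ray determined by a solution of $P_t(1)$, and $P_t(1)$ and $P_{4t}(5)$ cannot determine the same ray. Without these verifications neither implication involving (1) is established, so the argument as written only proves the equivalence of (2) with the existence of the automorphism, together with the uniqueness statements.
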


The case $t=2$ corresponds to the situation where $S$ is a generic quartic in $\IP^3$ and the non-symplectic involution
is Beauville's one~\cite{beauville2}. The next cases are $t=10,13$ or~$17$ and our result shows the existence of a non-symplectic involution
on the Hilbert scheme of two points on a generic K3 surface polarized by a class of square $20,26$ or~$34$.

\medskip

{\it Acknowledgements.} We thank Kieran O'Grady, Brendan Hassett and Emanuele Macr\`i for very helpful explanations. The second author was partially supported by 
the Italian-French Research Network Program GDRE-GRIFGA and thanks the hospitality of the
University of Poitiers where most of the work was done.

\section{Preliminary results}

In this paper, $S$ denotes an algebraic complex K3 surface with $\Pic(S)=\IZ H$. Since $H^2>0$, 
$H$ or $-H$ is effective so we can assume that $H$ is effective. By Nakai's criterion $H$ is ample and $S$ is projective.
We have $H^2=2t$ with $t\geq 1$ and~$H$ is very ample if $t\geq 2$ (see \cite[p.623]{saintdonat} or \cite{andreas2}).

\subsection{Basic results on Pell's equation}\label{ss:Pell}

For any $t\in\IN$ that is not a square  and $m\in\IZ$ we consider Pell's equation
$$
P_t(m)\colon x^2-ty^2=m,
$$
for $x,y$ integers. A solution $(x,y)$ of this equation is called \emph{positive}  
if $x>0$, $y>0$ and the positive solution with minimal $x$ is called the \emph{minimal} one.
Consider the real quadratic field $\IQ[\sqrt{t}]$. 
The \emph{norm} of any $z\coloneqq x+y\sqrt{t}\in\IQ[\sqrt{t}]$
is defined by $N(z)\coloneqq x^2-ty^2$. Using the identity $(x+y\sqrt{t})(x-y\sqrt{t})=N(z)$ it is easy to check
that a solution $(x,y)$ of $P_t(\pm 1)$ is positive if and only if $z=x+y\sqrt{t} >1$. It follows that the minimal
solution of $P_t(\pm 1)$ is the minimal \emph{real number} $z\in\IR$ satisfying $z>1$, $z\in\IZ[\sqrt{t}]$ and $N(z)=\pm 1$.

By a theorem of Lagrange, the \emph{continued fraction} expansion
$$
\sqrt{t} = a_0 + \frac{1}{a_1 + \frac{1}{a_2 + \frac{1}{\ddots}}},
$$
has the property that the sequence of positive integers $(a_i)_{i\geq 1}$ is 
periodic~\cite[Theorem~VII.3]{sierpinski}; we denote by $s$ its period. We define the $k$-th \emph{convergent}
of $\sqrt{t}$ as the rational number
$$
C_k = a_0 + \frac{1}{a_1 + \frac{1}{\ddots + \frac{1}{a_k}}}.
$$
We denote by $x_k$ (resp. $y_k$) the numerator (resp. denominator) of $C_k$.

Pell's equation $P_t(1)$ has a solution for any value of $t$. If $s$ is even, the positive solutions are the pairs $(x_{ns-1},y_{ns-1})$ for $n\geq 1$; if $s$ is odd, the positive solutions are the pairs $(x_{2ns-1},y_{2ns-1})$ for $n\geq 1$ \cite[Theorems~VIII.7~\&~VII.8]{sierpinski}.

Pell's equation $P_t(-1)$ has a solution if and only if $s$ is odd, in which case the positive solutions are the pairs $(x_{(2n - 1)s - 1},y_{(2n - 1)s - 1})$ for $n\geq 1$ \cite[Theorem~VIII.9]{sierpinski}.

The following lemma is certainly well-known, we include it for convenience:

\begin{lemma}\label{lem:Pell}
Let $(\alpha,\beta)$ be the minimal solution of the equation $P_t(1)$. If the equation $P_t(-1)$ has a solution, then its minimal solution $(a,b)$ satisfies:
$$
\alpha=2a^2+1, \quad \beta=2ab.
$$
\end{lemma}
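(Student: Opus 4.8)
The plan is to exploit the multiplicative structure of the norm form on $\IZ[\sqrt{t}]$. Write $u = a + b\sqrt t$, so that $N(u) = a^2 - tb^2 = -1$ since $(a,b)$ solves $P_t(-1)$. Then $u^2 = (a^2+tb^2) + 2ab\sqrt t$ has norm $N(u^2) = N(u)^2 = 1$, so $(a^2+tb^2, 2ab)$ is a positive solution of $P_t(1)$. Using $tb^2 = a^2+1$ we may rewrite $a^2 + tb^2 = 2a^2+1$, so $(2a^2+1, 2ab)$ solves $P_t(1)$. Thus the content of the lemma is precisely that \emph{this} solution is the minimal one, i.e. $\alpha = 2a^2+1$ and $\beta = 2ab$.

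The key step is therefore minimality. I would use the characterization recalled just above the lemma: the minimal solution of $P_t(\pm1)$ corresponds to the smallest real number $z \in \IZ[\sqrt t]$ with $z > 1$ and $N(z) = \pm 1$. Let $\alpha + \beta\sqrt t$ be this minimal element among those of norm $+1$ (with $\alpha,\beta>0$), and let $u = a + b\sqrt t > 1$ be the minimal element of norm $-1$. Since $u^2$ has norm $+1$ and $u^2 > 1$, minimality of $\alpha+\beta\sqrt t$ gives $\alpha + \beta\sqrt t \le u^2$. For the reverse inequality, suppose $\alpha + \beta\sqrt t < u^2$. Then $1 < (\alpha+\beta\sqrt t) u^{-1}$: indeed $u < u^2$ because $u>1$, hence $u^{-1}(\alpha+\beta\sqrt t) > u^{-1}\cdot u = 1$ once we also check $\alpha + \beta\sqrt t \ge u$ — this holds because $\alpha+\beta\sqrt t$ and $u$ are both $>1$ elements of $\IZ[\sqrt t]$ and I will argue the smallest norm-$1$ element is at least the smallest norm-$(-1)$ element, or more robustly: $(\alpha+\beta\sqrt t)u^{-1} = (\alpha+\beta\sqrt t)(−a+b\sqrt t)$ has norm $N = 1\cdot(-1) = -1$, and if it were $\le 1$ then its conjugate/inverse would be a norm-$(-1)$ element strictly between $1$ and $u$, contradicting minimality of $u$; and if it equals $1$ its norm would be $1$, not $-1$. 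Hence $(\alpha+\beta\sqrt t)u^{-1}$ is a norm-$(-1)$ element $>1$, so $(\alpha+\beta\sqrt t)u^{-1} \ge u$, i.e. $\alpha+\beta\sqrt t \ge u^2$, the desired contradiction. Therefore $\alpha + \beta\sqrt t = u^2$, which upon comparing coefficients and substituting $tb^2 = a^2+1$ yields exactly $\alpha = 2a^2+1$, $\beta = 2ab$.

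An alternative, perhaps cleaner, route is purely arithmetic: the group of units of norm $\pm 1$ in $\IZ[\sqrt t]$ that are $>1$ is generated by a single fundamental unit $\varepsilon > 1$; when $P_t(-1)$ is solvable, $\varepsilon$ has norm $-1$ and is precisely the minimal solution $u = a+b\sqrt t$ of $P_t(-1)$, while the minimal solution of $P_t(1)$ is $\varepsilon^2 = u^2$ (since $\varepsilon$ itself has the wrong norm and $\varepsilon^2$ is the next power up). This is the standard structure theory of units in real quadratic orders and directly gives $\alpha+\beta\sqrt t = u^2$. Either way, the computation $u^2 = (2a^2+1) + 2ab\sqrt t$ finishes the proof.

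The main obstacle is the minimality argument — verifying rigorously that no norm-$1$ element lies strictly between $1$ and $u^2$. The quotient trick $(\alpha+\beta\sqrt t)u^{-1}$ handles it, but one must be careful that $u^{-1} = -a + b\sqrt t$ has norm $-1$ (not $+1$), so multiplying a hypothetical smaller norm-$1$ element by $u^{-1}$ lands back among norm-$(-1)$ elements and forces it to be $\ge u$, contradicting $\alpha+\beta\sqrt t < u^2$. Once this dichotomy (everything $>1$ of norm $\pm1$ is a power of the fundamental unit, alternating in norm when $P_t(-1)$ is solvable) is in hand, the rest is the one-line expansion of $u^2$.
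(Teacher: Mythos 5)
Your proof is correct and takes essentially the same route as the paper's: both exploit the norm on $\IZ[\sqrt{t}]$, identify the candidate $u^2=(2a^2+1)+2ab\sqrt{t}$, and pin it down as the minimal solution of $P_t(1)$ by the quotient trick combined with minimality of $u=a+b\sqrt{t}$. The only (harmless) difference is that the paper finishes by citing that every positive solution of $P_t(1)$ is a power of the fundamental one, concluding $z^2=Z$ from $1<z^2<Z^2$, whereas you replace that citation with a second, self-contained application of minimality.
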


\begin{proof}
Put $Z\coloneqq \alpha+\beta\sqrt{t}$ and $z\coloneqq a+b\sqrt{t}$. We have $Z>1$, $z>1$ and
$\frac{z}{Z}<z$. Since $\frac{z}{Z}\in\IZ[\sqrt{t}]$ has norm $-1$, by minimality of $z$ this implies
that $\frac{z}{Z}< 1$, so $1<z< Z$ and $1<z^2<Z^2$ with $N(z^2)=1$. It is easy to see 
that all positive solutions of $P_t(1)$ are of the form $Z^n$ for some $n\geq 1$ (see for instance~\cite[Theorem~II.15]{sierpinski}). Since $1<Z<Z^2<\cdots$ we get $z^2=Z$,
hence $\alpha=2a^2+1, \beta=2ab$.
\end{proof}

\begin{rem}\label{rem:solPell}
With the same notation, putting $Z=A+B\sqrt{t}$, we see that the integer solutions of $P_t(1)$ are $(1,0)$ and $\pm Z^n$ with $z\in\IZ$. Putting $z=a+b\sqrt{t}$,
the integer solutions of $P_t(-1)$ are $\pm z^{2n+1}$ with $n\in\IZ$.
\end{rem}

\subsection{Basic facts on the Hilbert scheme $S^{[2]}$}

 We denote by $S^2$ the product of two copies of $S$ and 
by $p_i\colon S^2\to S$ the projection onto the $i$-th factor, $i=1,2$. Consider the symmetric quotient 
$S^{(2)}\coloneqq S^2/\mathfrak{S}_2$ where the symmetric group $\mathfrak{S}_2$ acts by permutation 
of the variables and denote by $\pi\colon S^2\to S^{(2)}$ the quotient map. The variety $S^{(2)}$ is
singular on the locus $\Delta$ which is the image of the diagonal of $S^2$ by $\pi$.
We denote by $S^{[2]}$ the Hilbert scheme of two points on $S$ which parametrizes the length two zero dimensional
subschemes of $S$. The Hilbert--Chow morphim $\rho\colon S^{[2]}\to S^{(2)}$ is projective and birational,
it is a resolution of the singularities. We denote by $E\coloneqq \rho^{-1}(\Delta)$ the exceptional divisor, which
is irreducible.

Recall that by a result of Beauville--Fujiki~\cite{beauville1} the variety $S^{[2]}$ is hyperk\"ahler and the space $H^2(S^{[2]},\IZ)$ is a lattice for the Beauville--Bogomolov--Fujiki quadratic form, isometric to $H^2(S,\IZ)\oplus \IZ \delta$ where $[E]=2\delta$.
We have $\delta^2=-2$ and thus the lattice $H^2(S^{[2]},\IZ)$ is isometric to $U^{\oplus 3}\oplus E_8^{\oplus 2}\oplus \langle -2\rangle$ 
where $U$ is the unique even unimodular hyperbolic lattice of rank $2$ and $E_8$ is the even negative definite lattice
of rank $8$ associated to the Dynkin diagram $E_8$.

There exists a natural morphism of groups $\Pic(S)\to\Pic(S^{[2]})$, $L\mapsto L_2$, constructed as follows: for any line bundle $L\in\Pic(S)$, the line bundle $p_1^\ast L\otimes p_2^\ast L$ projects to a line bundle 
$\overline{L}$ on $\Pic(S^{(2)})$ with $\pi^\ast\overline{L}\cong p_1^\ast L\otimes p_2^\ast L$ and 
one defines $L_2\coloneqq \rho^\ast\overline{L}$. Denoting by $\Pic(S)_2$ the set of isomorphism classes of line bundles of the form $L_2$ one has
$$
\Pic(S^{[2]})=\Pic(S)_2\otimes \IZ\cD
$$
where $\cD^2\cong\cO(-E)$ and $c_1(\cD)=-\delta$. In particular, putting $h\coloneqq H_2$, in our situation the sequence $(h,-\delta)$ is a basis of the N\'eron--Severi lattice $\NS(S^{[2]})$ 
whose bilinear form is:
$$
\left(
\begin{matrix}
2t & 0\\ 0 & -2
\end{matrix}
\right)
$$

\subsection{Basic results on the group $\Aut(S^{[2]})$}

We denote by $\Aut(S^{[2]})$ the group of biholomorphic automorphisms of $S^{[2]}$, which is a discrete group (see~\cite{boissiere}). Every automorphism $f\in\Aut(S)$ induces an automorphism denoted $f^{[2]}$ on $S^{[2]}$, such automorphisms are called \emph{natural}. Associating to each
 automorphism $\varphi$ of $S^{[2]}$ the isometry $(\varphi^{-1})^\ast$ of $H^2(S^{[2]},\IZ)$ we get
a morphism of groups $\Aut(S^{[2]})\to O(H^2(S^{[2]},\IZ))$; Beauville~\cite[Proposition~10]{beauville2} proved that this morphism injective. 
We consider the morphism obtained by restriction to the N\'eron--Severi group:
$$
\Psi\colon\Aut(S^{[2]})\to O(\NS(S^{[2]})), \quad \varphi\mapsto \left.(\varphi^{-1})^\ast\right|_{\NS(S^{[2]})}.
$$
The following result is well-known (see for instance \cite[Corollary~15.2.12]{huybrechts}):

\begin{lemma}\label{lem:AutS}
 Let $S$ be an algebraic K3 surface such that $\Pic(S)=\IZ H$, $H^2=2t$, $t\geq 1$. 
\begin{enumerate}
\item\label{lem:AutS_item1} If $t\geq 2$ then $\Aut(S)=\{\id_S\}$.
\item\label{lem:AutS_item2} If $t=1$ then $S$ is the double cover of $\IP^2$ branched along a smooth sextic curve and $\Aut(S)=\{\id_S,\iota\}$ where $\iota$ is the covering involution.
\end{enumerate}
\end{lemma}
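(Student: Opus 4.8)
The plan is to reduce everything to the action on $H^2(S,\IZ)$ via the global Torelli theorem for K3 surfaces (see e.g.\ \cite{huybrechts}), which identifies $\Aut(S)$ with the group of Hodge isometries of $H^2(S,\IZ)$ preserving the Kähler cone. Write $T\coloneqq \NS(S)^\perp\subset H^2(S,\IZ)$ for the transcendental lattice and $\IC\omega=H^{2,0}(S)$ for the period line; recall that $H^2(S,\IZ)$ has signature $(3,19)$, that $\NS(S)=\IZ H$ has signature $(1,0)$, and hence that $T$ has signature $(2,19)$ and rank~$21$. First I would record that every $f\in\Aut(S)$ fixes $H$: the isometry $f^\ast$ preserves the Hodge decomposition, hence preserves $\NS(S)=\IZ H$, and since it preserves ampleness and $H$ is the ample generator, $f^\ast H=H$. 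Thus $\Aut(S)$ acts trivially on $\NS(S)$.

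Next I would introduce the character $\chi\colon\Aut(S)\to\IC^\ast$ recording the scalar by which $f^\ast$ acts on $\omega$, and prove it is injective, the key input being that $T\otimes\IQ$ is an irreducible rational Hodge structure. If $\chi(f)=1$ then $f^\ast\omega=\omega$, so $\Ker(f^\ast-\id)$ is a sub-Hodge structure of $T\otimes\IQ$ whose $(2,0)$-part contains $\omega\neq 0$, hence is all of $T\otimes\IQ$; therefore $f^\ast=\id$ on $T$ and, by the previous step, on $\NS(S)$, so $f^\ast$ is the identity on the finite-index sublattice $\NS(S)\oplus T$ and thus on $H^2(S,\IZ)$. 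By Torelli $f=\id$. I would then observe that $\Aut(S)$ is finite: every $f^\ast$ fixes $\NS(S)\otimes\IR$ pointwise and preserves the positive-definite plane $P\coloneqq\langle\mathrm{Re}\,\omega,\mathrm{Im}\,\omega\rangle$ (because $f^\ast\omega\in\IC\omega$) together with its negative-definite orthogonal complement in $T\otimes\IR$, so its image lies in a compact subgroup of $O(H^2(S,\IR))$ while being discrete, whence finite. Consequently $\chi$ identifies $\Aut(S)$ with a finite subgroup $\mu_m\subset\IC^\ast$, so $\Aut(S)$ is cyclic of some order $m$.

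To bound $m$, I would use that a generator acts on $\omega$ by a primitive $m$-th root of unity $\zeta_m$; irreducibility of $T\otimes\IQ$ makes it a vector space over $\IQ(\zeta_m)$, so $\varphi(m)\mid\rank T=21$. Since $21$ is odd whereas $\varphi(m)$ is even for every $m\geq 3$, this forces $m\in\{1,2\}$.

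Finally I would determine when $m=2$ occurs. The non-trivial involution $\sigma$ acts as $+\id$ on $\NS(S)$ and, again by irreducibility, as $-\id$ on $T$. Such an integral isometry of $H^2(S,\IZ)$ exists if and only if the glue isomorphism between the discriminant forms of $\NS(S)$ and $T$ intertwines $\id$ with $-\id$, which happens precisely when $2$ annihilates $A_{\NS(S)}\cong\IZ/2t\IZ$, i.e.\ when $t=1$. For $t=1$ the isometry exists, fixes the ample class $H$ and hence the one-dimensional Kähler cone, so by Torelli it is induced by a unique involution $\iota$; as $H^2=2$ the system $|H|$ realizes $S$ as a double cover of $\IP^2$ branched over a smooth sextic and $\iota$ is its covering involution. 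For $t\geq 2$ no such involution exists, so $m=1$ and $\Aut(S)=\{\id_S\}$. I expect the main obstacle to be this last lattice-theoretic step: one must correctly translate the extendability of $\id\oplus(-\id)$ on $\NS(S)\oplus T$ to $H^2(S,\IZ)$ into the gluing condition on discriminant forms, and then upgrade the lattice isometry to a genuine automorphism through the effective Torelli theorem. The irreducibility of the transcendental Hodge structure is the other indispensable ingredient, used both to kill symplectic automorphisms and to force the eigenvalues on $T$.
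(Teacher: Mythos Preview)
The paper does not actually prove this lemma: it is stated as ``well-known'' with a bare reference to \cite[Corollary~15.2.12]{huybrechts} and no argument is given. So there is nothing to compare your approach against; what you have written is a self-contained proof where the paper offers none.

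Your argument is the standard one and is essentially correct: Torelli reduces the question to Hodge isometries, the irreducibility of the transcendental Hodge structure forces any automorphism to act on $T$ through a single cyclotomic factor, the parity obstruction $\varphi(m)\mid 21$ leaves only $m\in\{1,2\}$, and the gluing condition on discriminant groups ($A_{\NS(S)}\cong\IZ/2t\IZ$ must be $2$-torsion) singles out $t=1$. One phrasing should be tightened: you write that $\sigma$ ``fixes the ample class $H$ and hence the one-dimensional K\"ahler cone''. The K\"ahler cone of $S$ is $20$-dimensional, not one-dimensional; what you need (and have) is that $\sigma$ fixes the K\"ahler class $H$, and a Hodge isometry sending one K\"ahler class to a K\"ahler class preserves the K\"ahler cone, so the strong Torelli theorem applies. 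With that correction the proof goes through.
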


\begin{lemma}\label{lem:AutHilbS} Let $S$ be an algebraic K3 surface such that $\Pic(S)=\IZ H$, $H^2=2t$, $t\geq 2$. Then $\Ker(\Psi)\cong \Aut(S)$.
In particular if $t\geq 2$  the morphism $\Psi$
 is injective.
\end{lemma}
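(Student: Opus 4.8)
The plan is to show that $\Ker(\Psi)$ is precisely the subgroup of \emph{natural} automorphisms, i.e.\ that $f\mapsto f^{[2]}$ defines an injective group homomorphism $\Aut(S)\to\Aut(S^{[2]})$ whose image is $\Ker(\Psi)$. Granting this, $\Ker(\Psi)\cong\Aut(S)$, and the final sentence of the statement is then immediate from Lemma~\ref{lem:AutS}\,(\ref{lem:AutS_item1}), which gives $\Aut(S)=\{\id_S\}$ as soon as $t\geq 2$.

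Two of the required inclusions are routine. The map $f\mapsto f^{[2]}$ is injective: restricting $f^{[2]}=\id$ to $S^{[2]}\setminus E\cong S^{(2)}\setminus\Delta$ gives $\{f(x),f(y)\}=\{x,y\}$ for all $x\neq y$ in $S$, hence $f=\id_S$. And $f^{[2]}\in\Ker(\Psi)$: the automorphism $f^{[2]}$ preserves $E$ (it sends a non-reduced length-two subscheme to a non-reduced one), so it fixes $\delta$; moreover $f^\ast H\cong H$, because $f^\ast$ preserves $\Pic(S)=\IZ H$, the self-intersection $H^2$, and ampleness, so by functoriality of $L\mapsto L_2$ it fixes $h=H_2$. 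Since $(h,-\delta)$ is a basis of $\NS(S^{[2]})$, the isometry $(f^{[2]})^\ast$ is the identity there.

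The substance is the opposite inclusion: every $\varphi\in\Ker(\Psi)$ is natural. Since $\varphi^\ast=\id$ on $\NS(S^{[2]})$, in particular $\varphi^\ast[E]=[E]$, and as $E$ is the unique effective divisor in its class (being an irreducible divisor of negative Beauville--Bogomolov square), $\varphi(E)=E$. Thus $\varphi$ restricts to an automorphism of $S^{[2]}\setminus E$; transporting it through $\rho\colon S^{[2]}\setminus E\xrightarrow{\ \sim\ }S^{(2)}\setminus\Delta$ and extending across $\Delta$ using that $S^{(2)}$ is normal and $\Delta$ has codimension $2$, we get $\bar\varphi\in\Aut(S^{(2)})$ with $\rho\circ\varphi=\bar\varphi\circ\rho$. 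Now $\bar\varphi$ preserves the singular locus $\Delta$, hence restricts to an automorphism of $S^{(2)}\setminus\Delta$; over this open set $\pi$ is an \'etale double cover, and $S^2$ minus its diagonal is simply connected (because $S$, hence $S^2$, is simply connected and the diagonal has codimension $2$), so it is the universal cover of $S^{(2)}\setminus\Delta$. Therefore $\bar\varphi$ lifts to an automorphism of $S^2$ minus its diagonal, which extends by normality of $S^2$ to $\tilde\varphi\in\Aut(S^2)$ with $\pi\circ\tilde\varphi=\bar\varphi\circ\pi$; as the two lifts of $\bar\varphi$ differ by the transposition $\sigma\in\mathfrak{S}_2$, $\tilde\varphi$ commutes with $\sigma$. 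Since $H^1(S)=0$, K\"unneth gives $\NS(S^2)=\IZ\,p_1^\ast H\oplus\IZ\,p_2^\ast H$ with ample cone $\{a\,p_1^\ast H+b\,p_2^\ast H : a,b>0\}$, and the only automorphisms of this lattice preserving that cone are the identity and the exchange of the two generators; so $\tilde\varphi^\ast$ either fixes both $p_i^\ast H$ or swaps them. As $H$ is very ample for $t\geq 2$, the linear system $|p_i^\ast H|$ realizes $p_i$ followed by the embedding $S\hookrightarrow\IP^{t+1}$; in the first case $\tilde\varphi$ therefore carries fibres of each $p_i$ to fibres of $p_i$, so $p_i\circ\tilde\varphi=g_i\circ p_i$ with $g_i\in\Aut(S)$, whence $\tilde\varphi=g_1\times g_2$, and commutation with $\sigma$ forces $g_1=g_2=:g$; in the second case $\sigma\circ\tilde\varphi$ falls under the first, so $\tilde\varphi=\sigma\circ(g\times g)$. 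Either way $\tilde\varphi$ descends to $g^{(2)}$ on $S^{(2)}$, hence $\varphi=g^{[2]}$ is natural; and for $t\geq 2$ Lemma~\ref{lem:AutS}\,(\ref{lem:AutS_item1}) forces $g=\id_S$, so $\varphi=\id$ and $\Psi$ is injective.

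I expect the main obstacle to be this descent-and-relift through $S^{(2)}$ and $S^2$: checking that $\varphi$ genuinely descends (the rigidity of $E$), justifying the lift (the $\pi_1$-computation identifying $S^2$ minus its diagonal with the universal cover of $S^{(2)}\setminus\Delta$), and recognizing $\tilde\varphi$ as a product automorphism from its action on $\NS(S^2)$ together with the very-ampleness of $H$. The remaining verifications are bookkeeping.
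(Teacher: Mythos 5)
Your strategy is sound in outline, but be aware that the paper handles the hard inclusion in one line: since $\varphi\in\Ker(\Psi)$ fixes $\delta$, Boissi\`ere--Sarti \cite[Theorem~1]{boissieresarti} says directly that $\varphi$ is natural, and then Lemma~\ref{lem:AutS} finishes. What you have written is essentially a reproof of that cited theorem, and as written it has a genuine gap at both ``extension'' steps. The claim that a morphism defined on the complement of a codimension-two closed subset of a normal variety extends ``by normality'' is false when the target is projective: the projection $\mathbb{A}^2\setminus\{0\}\to\IP^1$, $(x,y)\mapsto[x:y]$, does not extend to $\mathbb{A}^2$. Hartogs/normality extends regular functions (hence morphisms to affine targets), not morphisms to $S^{(2)}$ or to $S^2$. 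The first extension (to $\bar\varphi\in\Aut(S^{(2)})$) is actually dispensable --- you only need the automorphism of $S^{(2)}\setminus\Delta$ in order to lift it, and at the end you can compare $\varphi$ with $g^{[2]}$ on the dense open set $S^{[2]}\setminus E$. But the second extension (to $\tilde\varphi\in\Aut(S^2)$) is load-bearing: your identification of $\tilde\varphi$ as $g\times g$ or $\sigma\circ(g\times g)$ uses the ample cone of $S^2$ and the linear systems $|p_i^\ast H|$ on all of $S^2$, so you need $\tilde\varphi$ to be a regular automorphism of $S^2$ before that analysis, and ``normality'' does not give it to you.

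The gap is fixable, but requires an actual argument. For instance: $\Pic(S^2\setminus D)=\Pic(S^2)=\IZ p_1^\ast H\oplus\IZ p_2^\ast H$ because the diagonal $D$ has codimension $2$ in the smooth variety $S^2$; the automorphism $\tilde\varphi_0$ of $S^2\setminus D$ acts there and (after composing with $\sigma$ if necessary) fixes each $p_i^\ast H$; the sections of $p_i^\ast H$ extend across $D$, so $p_i\circ\tilde\varphi_0$ agrees, up to a projectivity, with the map given by $|p_i^\ast H|$ on $S^2\setminus D$, hence is constant on the connected fibres $\{x\}\times(S\setminus\{x\})$ of $p_i|_{S^2\setminus D}$ and factors as $g_i\circ p_i$ with $g_i\in\Aut(S)$; only then does $\tilde\varphi_0=(g_1\times g_2)|_{S^2\setminus D}$ visibly extend to $S^2$. (Alternatively, one can quote the standard fact that a birational self-map that is an isomorphism in codimension one and pulls back an ample class to an ample class is biregular.) The remaining steps --- injectivity of $f\mapsto f^{[2]}$, the verification that $f^{[2]}\in\Ker(\Psi)$, the rigidity of $E$ giving $\varphi(E)=E$, the simple connectivity of $S^2$ minus the diagonal, and the final appeal to Lemma~\ref{lem:AutS}~(\ref{lem:AutS_item1}) --- are correct.
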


\begin{proof}
If $\left.(\varphi^{-1})^\ast\right|_{\NS(S^{[2]})}$ is the identity, then in particular it leaves invariant 
the class $\delta$. By Boissi\`ere--Sarti~\cite[Theorem~1]{boissieresarti} this implies that $\varphi$ is 
a natural automorphism: $\varphi=f^{[2]}$ for some $f\in\Aut(S)$. By Lemma~\ref{lem:AutS}, if $t\geq 2$ one 
has $f=\id_S$ so $\Psi$ is injective.  
\end{proof}

\section{The ample cone of $S^{[2]}$}

In  this section we determine the ample cone $\cA_{S^{[2]}}\subset \NS(S^{[2]})$ in the basis $(h,-\delta)$.
We first recall a classical method due to Beltrametti--Sommese~\cite{belsom} and Catanese--G{\"o}ttsche~\cite{catgo} to construct ample classes and then we give a full description of the ample
cone using recent results of Bayer--Macr\`i~\cite{bayermacri}. Both points of views will be needed in the sequel.
Earlier related results were obtained by Hassett--Tschinkel~\cite{HT,HT2} and  Markman~\cite{Markman2}.

\subsection{The map to the Grassmannian}\label{ss:grass}

Let $L=aH$, $a>0$, be an ample line bundle on $S$ and consider the Grassmannian  $\IG := \Grass\pa{2, H^0(S, L)^\ast}$
of $2$-dimensional subspaces of $H^0(S, L)^\ast$. If $Z\subset S$ is any $0$-cycle, the exact sequence
$$
0 \lra L \otimes I_Z \lra L \lra L \otimes \cO_Z \lra 0
$$
induces an exact cohomology sequence:
$$
0 \lra H^0(S, L \otimes I_Z) \lra H^0(S, L)\overset{r_Z}{\lra} H^0(S, L \otimes \cO_Z) \lra \cdots
$$
Following \cite{belsom,catgo} the line bundle $L$ is called $2$-very ample if the restriction map $r_Z$ is
onto for any $0$-cycle $Z$ of length less than or equal to $3$. If $L$ is 
very ample it defines a morphism
$$
\phi: S^{[2]} \longrightarrow \IG, \quad [Z] \mapsto H^0(S, L\otimes I_Z).
$$
By Catanese--G{\"o}ttsche \cite[Main Theorem]{catgo} $\phi$ is an embedding if and only if 
$L$ is $2$-very ample.

\begin{prop}\label{prop:ample_classes} Let $S$ be an algebraic K3 surface such that $\Pic(S)=\IZ H$ with $H^2=2t$, $t\geq 1$.

\begin{enumerate}
 \item If $t\geq 4$ then $ah-\delta$ is ample on $S^{[2]}$ if $a\geq 1$.
\item If $t\in\{2,3\}$ then $ah-\delta$ is ample on $S^{[2]}$ if  $a\geq 2$.
\item If $t=1$ then $ah-\delta$ is ample on $S^{[2]}$ if  $a\geq 3$.
\end{enumerate}
\end{prop}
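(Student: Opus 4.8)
The plan is to use the criterion of Catanese–Göttsche recalled just above: to show that $ah - \delta$ is ample on $S^{[2]}$, it suffices to exhibit an ample line bundle $L$ on $S$ which is $2$-very ample and such that the pullback via the embedding $\phi\colon S^{[2]}\hookrightarrow \IG$ of the Plücker polarization of $\IG$ equals (a positive multiple of) $ah-\delta$. So the argument splits into two parts: first, a numerical/geometric part identifying the class $\phi^\ast\cO_\IG(1)$, and second, a part establishing $2$-very ampleness of the relevant $L = aH$.

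\emph{Identifying the pullback class.} On the Grassmannian $\IG = \Grass(2, H^0(S,L)^\ast)$ the Plücker line bundle is $\det$ of the tautological rank-$2$ quotient bundle (or its dual, depending on conventions). Pulling back along $\phi$, the fibre over $[Z]\in S^{[2]}$ is built from $H^0(S, L\otimes I_Z)$, equivalently from $H^0(S,L\otimes\cO_Z)$ which has rank $2$. I would compute $\phi^\ast\cO_\IG(1)$ by relating it to the tautological sheaf $L^{[2]}$ on $S^{[2]}$ (whose fibre at $[Z]$ is $H^0(Z, L\otimes\cO_Z)$): one has $\det L^{[2]} = L_2 - \delta$ when $\det L = L$ is replaced by the correct normalization, and for $L = aH$ this gives $\phi^\ast\cO_\IG(1) = ah - \delta$. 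This is the standard computation of Beltrametti–Sommese and Catanese–Göttsche; since the paper explicitly cites both, I would invoke their formula rather than re-derive the Chern class bookkeeping, noting only that in the basis $(h,-\delta)$ of $\NS(S^{[2]})$ the pullback of the Plücker class of $\Grass(2, H^0(S,aH)^\ast)$ is exactly $ah-\delta$.

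\emph{Establishing $2$-very ampleness.} This is the real content, and the bound on $a$ in each case comes from here. The criterion is: $aH$ is $2$-very ample iff for every length-$\le 3$ subscheme $Z\subset S$ the restriction $r_Z\colon H^0(S,aH)\to H^0(Z, aH\otimes\cO_Z)$ is surjective, equivalently $H^1(S, aH\otimes I_Z) = 0$. Since $\Pic(S)=\IZ H$, the only effective divisors are multiples of $H$, and $H$ itself is a curve of genus $t+1$; one runs through the short list of configurations of $\le 3$ points (possibly with tangent or curvilinear structure) lying on $S$, on an irreducible $H$, or on no curve at all. Using Serre duality $H^1(S, aH\otimes I_Z)^\vee \cong \Ext^1(I_Z, aH^{-1})$ and restriction to a smooth curve $C\in |H|$ (of genus $t+1$, with $aH|_C$ of degree $2at$), Riemann–Roch and Clifford-type estimates show $r_Z$ is onto as soon as $\deg(aH|_C) = 2at \ge \deg Z + (\text{correction}) = 3 + 2g(C)$-ish; tracking the exact inequality yields $2at \ge $ something like $2t+3$, which for $t\ge 4$ holds already at $a=1$, for $t\in\{2,3\}$ needs $a\ge 2$, and for $t=1$ needs $a\ge 3$. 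The main obstacle is handling the \emph{non-reduced} length-$3$ schemes (e.g. a point with a $2$-dimensional tangent direction, or a curvilinear length-$3$ scheme not contained in any smooth member of $|H|$) carefully, since the clean curve-restriction argument needs $Z$ to be contained in, or sufficiently transverse to, a smooth hyperplane section; these boundary cases are exactly where the precise numerical thresholds $a\ge 2$ (for $t=2,3$) and $a\ge 3$ (for $t=1$) get forced, so I would treat them by a direct cohomological vanishing on $S$ (Kawamata–Viehweg / Reider-type results for surfaces) rather than by reduction to a curve.

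In summary: (i) cite Catanese–Göttsche to reduce ampleness of $ah-\delta$ to $2$-very ampleness of $aH$ plus the identification $\phi^\ast\cO_\IG(1) = ah-\delta$; (ii) prove $2$-very ampleness of $aH$ for the stated ranges of $a$ by a case analysis over length-$\le 3$ subschemes, using $\Pic(S)=\IZ H$ to restrict the geometry, curve-restriction plus Riemann–Roch for the generic cases, and Reider-type surface vanishing for the non-reduced edge cases. I expect step (ii), specifically the non-curvilinear length-$3$ schemes near the threshold value of $a$, to be where essentially all the work lies.
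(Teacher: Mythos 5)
The first half of your proposal coincides with the paper's argument: if $L=aH$ is $2$-very ample then $\phi$ is an embedding by Catanese--G\"ottsche, and $\phi^\ast\cO_\IG(1)$ has first Chern class $c_1(L_2)-\delta=ah-\delta$ (the paper cites Bertram--Coskun for this), so $ah-\delta$ is ample. That part is fine.

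The gap is in the second half, which is exactly where the quantitative content of the proposition lives. The paper does not re-prove $2$-very ampleness from scratch: it invokes Knutsen's exact criterion for K3 surfaces ($L$ is $2$-very ample if and only if $L^2\geq 8$ and there is no effective divisor $D$ with $2D^2\leq L\cdot D\leq D^2+3\leq 6$, together with the stated equality clauses). Since $\Pic(S)=\IZ H$, the only candidate divisors are $D=nH$, and the whole verification collapses to a two-line computation; the single genuinely exceptional configuration is $t=1$, $D=H$, $L=2H$. Your plan to establish $2$-very ampleness directly by restriction to a curve in $|H|$ plus Riemann--Roch and Reider-type vanishing is, in spirit, the method underlying such criteria, but as written it does not prove the statement. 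Concretely: (i) your announced threshold ``$2at\geq 2g(C)+3$-ish'' with $g(C)=t+1$ reads $2at\geq 2t+5$, which for $t\geq 4$ and $a=1$ is \emph{false}, yet $h-\delta$ is ample in that range; so your numerology does not reproduce the claimed bounds, and the actual governing inequality ($L^2\geq 8$, i.e.\ $ta^2\geq 4$) is different. (ii) The delicate case forcing $a\geq 3$ when $t=1$ is $L=2H$, which satisfies $L^2=8$ and clears any naive degree bound, but is the pullback of $\cO_{\IP^2}(2)$ under the double cover $S\to\IP^2$ and hence fails even to be very ample (it cannot separate a point from its image under the covering involution); this is detected by the exceptional clause $L\sim 2D$, $L^2\leq 12$ in Knutsen's criterion, and your framework as described gives no mechanism for catching it. Either cite Knutsen's theorem and do the short computation with $D=nH$, or carry out the case analysis in full with the correct inequalities --- the sketch as it stands does not yield the stated ranges of $a$.
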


\begin{proof}
If $L=aH$ is $2$-very ample then $\phi$ is an embedding so $\phi^\ast \cO_\IG(1)$ is ample on~$S^{[2]}$. As explained 
in \cite[Section~2]{bertramcoskun} its first Chern class is equal to $c_1(L_2)-\delta=ah-\delta$,
so the class $ah-\delta\in \NS(S^{[2]})$ is ample.
 
By Knutsen~\cite[Theorem 1.1]{andreas}, $L$ is $2$-very ample if and only if $L^2\geq 8$ and there exists
no effective divisor $D$ satisfying the following conditions:
\begin{enumerate}
\item $2 D^2 \overset{\text{(i)}}{\leq} L \cdot D \overset{\text{(ii)}}{\leq} D^2 +3 \overset{\text{(iii)}}{\leq} 6$;
\item condition (i) is an equality if and only if  $L \sim 2D$ and $L^2 \leq 12$;
\item condition (iii) is an equality if and only if $L \sim 2D$ and  $L^2 = 12$.
\end{enumerate}
Since $L^2=2ta^2$ we get immediately
$$
L^2\geq 8 \Leftrightarrow
\begin{cases}
a \geq 2 & \text{if } t = 1, 2, 3\\
a \geq 1 & \text{if } t \geq 4.
\end{cases}
$$
Let $D=nH$ be an effective divisor ($n >0$) satisfying condition (iii). We get $2tn^2\leq 3$ so this case
happens only when $t=1$, with $D=H$. As a consequence, the line bundle $L=aH$ is $2$-very ample
for any $a\geq 2$ if $t=2,3$ and for any $a\geq 1$ if $t\geq 4$. Assume now that $t=1$ and $D=H$ satisfies condition (ii).
We get $a\in\{1,2\}$. Condition (i) is not satisfied if $a=1$ but all conditions are satisfied if $a=2$. 
Hence if $t=1$ the line bundle $L=aH$ is $2$-very ample for any $a\geq 3$. 
\end{proof}

We denote by $(x,y)$ the coordinates in $\NS(S^{[2]})\otimes_\IZ\IR$ corresponding to the class $xh-y\delta$.
Observe that: $h$ is a nef and non-ample class;  $-\delta$ is not ample; $3h - \delta$ is ample 
by Proposition \ref{prop:ample_classes}. Hence we 
have $\cA_{S^{[2]}} \subseteq \left\{xh-y\delta\,|\, x>0, y>0 \right\}$.

\subsection{The ample cone and Pell's equation}

Bayer--Macr\`i \cite{bayermacri} use wall-crossing with respect to Bridgeland stability conditions to determine
the movable cone of moduli spaces of sheaves on K3 surfaces. In the particular case of the Hilbert scheme
of two points on a generic K3 surface, a direct application of \cite[Proposition~13.1, Lemma~13.3]{bayermacri} 
gives a full description of the ample cone $\cA_{S^{[2]}}$ depending on solutions of Pell's equation.

\begin{prop}\label{prop:nefcone}
Let $S$ be an algebraic K3 surface such that $\Pic(S)=\IZ H$ with $H^2=2t$, $t\geq 1$.
\begin{enumerate}
\item If $t$ is a square, $t = k^2$  with $k\geq 1$, then $\cA_{S^{[2]}}$ 
is the interior of the cone generated by $h$ and $h - k \delta$.

\item If $t$ is not a square and  Pell's equation $P_{4t}(5)$ has a solution, 
then $\cA_{S^{[2]}}$ is the interior of the cone generated by $h$ and $x h - 2t y \delta$ where
 $(x, y)$ is the minimal solution of $P_{4t}(5)$.

\item If $t$ is not a square and  Pell's equation $P_{4t}(5)$
has no solution, then $\cA_{S^{[2]}}$ is the interior of the cone generated by $h$ and $x h - t y \delta$ where 
$(x, y)$ is the minimal solution of  Pell's equation $P_t(1)$.
\end{enumerate}
\end{prop}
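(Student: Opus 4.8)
The plan is to deduce the proposition directly from the wall-crossing results of Bayer--Macr\`i, specifically from \cite[Proposition~13.1, Lemma~13.3]{bayermacri}; the substance of the argument is the translation of their Mukai-lattice picture into the basis $(h,-\delta)$. Recall that $S^{[2]}$ is the moduli space $M_H(v)$ of $H$-Gieseker stable sheaves on $S$ with Mukai vector $v=(1,0,-1)$ in the algebraic Mukai lattice $\widetilde H_{\mathrm{alg}}(S)=\IZ(1,0,0)\oplus\IZ(0,H,0)\oplus\IZ(0,0,1)$, which has signature $(2,1)$ and in which $v^2=2$. Since $S^{[2]}$ is a smooth projective hyperk\"ahler variety, $\cA_{S^{[2]}}$ is the interior of its nef cone, and the Mukai isometry $\theta_v\colon v^\perp\xrightarrow{\ \sim\ }\NS(S^{[2]})$ --- which sends $(0,H,0)\mapsto h$ and $(1,0,1)\mapsto-\delta$, so that the quadratic form is $\mathrm{diag}(2t,-2)$ on both sides --- lets us read off this nef cone from \cite{bayermacri}: it is the chamber adjacent to the ``Hilbert--Chow wall'' in the decomposition of the positive cone of $\NS(S^{[2]})_\IR$ by the walls attached to primitive rank-$2$ hyperbolic sublattices $\mathcal H\subseteq\widetilde H_{\mathrm{alg}}(S)$ containing $v$.

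By the end of Section~\ref{ss:grass} we already know $\cA_{S^{[2]}}\subseteq\{xh-y\delta\mid x>0,\,y>0\}$ and that $h$ is nef but not ample (it is the pullback along $\rho$ of an ample class on $S^{(2)}$); hence the nef cone is the two-dimensional cone spanned by $\IR_{\geq 0}h$ --- the Hilbert--Chow ray, coming from the sublattice $\langle(1,0,0),(0,0,1)\rangle\cong U$, for which $\theta_v(U^\perp)=\IR h$ --- and by a second extremal ray $\IR_{\geq 0}D_0$, where $D_0$ generates the genuine wall of smallest positive slope $y/x$. So everything reduces to identifying $D_0$, and for this I would go through the classification (\cite[Proposition~13.1]{bayermacri}, specialised from $S^{[n]}$ to $n=2$) of the type of this bounding wall according to the small class its lattice $\mathcal H$ carries. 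Writing a prospective Mukai class as $\mathbf w=(r,yH,s)$ one has $\mathbf w^2=2(ty^2-rs)$ and $\langle\mathbf w,v\rangle=r-s$, and there are three possibilities. First, $\mathcal H$ may carry a primitive isotropic class orthogonal to $v$, i.e.\ $\mathbf w=(r,yH,r)$ with $r^2=ty^2$: this forces $t$ to be a square $t=k^2$, the primitive such class is $(k,H,k)$, it induces a Lagrangian fibration $S^{[2]}\to\IP^2$, and $D_0=\theta_v((k,H,k))=h-k\delta$, of square $0$. Secondly, $\mathcal H$ may carry a spherical class $\mathbf w$ with $\langle\mathbf w,v\rangle=1$, i.e.\ $\mathbf w=(s+1,yH,s)$ with $(s+1)s-ty^2=1$: after the substitution $x=2s+1$ this is exactly $P_{4t}(5)$, and, taking $(x,y)$ to be its minimal solution, a short computation with $\theta_v$ gives $D_0=xh-2ty\delta$, of square $10t$. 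Thirdly, $\mathcal H$ may carry a spherical class orthogonal to $v$, i.e.\ $\mathbf w=(r,yH,r)$ with $r^2-ty^2=1$, which is $P_t(1)$: taking $(x,y)=(r,y)$ minimal gives similarly that $D_0$ is orthogonal to the $(-2)$-class $\theta_v(\mathbf w)=yh-r\delta$ and equals $D_0=xh-ty\delta$, of square $2t$.

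This matches the three cases of the proposition one by one, and the trichotomy amounts to the claim that the bounding wall is of the first type exactly when $t$ is a square, of the second type exactly when $t$ is not a square and $P_{4t}(5)$ is solvable, and of the third type otherwise (here one uses that $P_t(1)$ is always solvable, whereas $P_{k^2}(1)$ has only the trivial solution). I expect the main obstacle to be exactly this ordering of the walls: one must check that the three families above exhaust the genuine walls, that within each family the minimal Pell solution gives the wall of smallest slope (this follows from the monotonicity of $2ty/x$ and $ty/x$ along a branch of Pell solutions), and that a wall of the second type, when present, has smaller slope than every wall of the third type. All of this is the content of \cite[Proposition~13.1, Lemma~13.3]{bayermacri}, so the proof is in the end a matter of matching normalisations and tracking signs; some extra care is needed for the small values $t=1,2,3$, where several of these walls coexist, and there the explicit ampleness of $3h-\delta$, $2h-\delta$ and $h-\delta$ from Proposition~\ref{prop:ample_classes} serves as a convenient cross-check.
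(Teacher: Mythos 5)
Your proposal takes essentially the same route as the paper: Proposition~\ref{prop:nefcone} is stated there as a direct application of \cite[Proposition~13.1, Lemma~13.3]{bayermacri} with no further argument written out, and your expansion correctly supplies the translation (Mukai vector $v=(1,0,-1)$, the isometry $\theta_v$, the three families of sublattices and the resulting Pell equations $r^2=ty^2$, $x^2-4ty^2=5$ with $x=2s+1$, and $x^2-ty^2=1$). The points you flag as remaining --- exhaustiveness of the list of walls and the comparison of slopes --- are exactly what the paper also delegates to Bayer--Macr\`i, so nothing is missing relative to the paper's own proof.
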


\begin{rem}\label{rem: cone and t}
If $t$ is not a square, the knowledge of the ample cone of $S^{[2]}$ 
determines in which of the cases of Proposition \ref{prop:nefcone} we are, and
in particular whether  Pell's equation $P_{4t}(5)$ admits a solution, since  $P_t(1)$ and $P_{4t}(5)$ have no common solution.
\end{rem}

\section{The isometry group of $\NS(S^{[2]})$}

Recall that in the basis $(h,-\delta)$ of the N\'eron--Severi lattice $\NS(S^{[2]})$ the bilinear form is represented by the matrix
$$
\left(
\begin{matrix}
2t & 0\\ 0 & -2
\end{matrix}
\right).
$$
In this section, we describe the group $O(\NS(S^{[2]}))$ of isometries of the N\'eron--Severi lattice of $S^{[2]}$. The matrix in the basis $(h, -\delta)$ of such an isometry is 
$$
M = \left( \begin{array}{cc}
A & B\\
C & D
\end{array} \right)
$$
and the following conditions hold:
\begin{enumerate}
\item $\det M = \pm 1$, \ie $AD - BC = \pm 1$;
\item $2t = h^2 = (Ah - C \delta)^2$, \ie $C^2 = t(A^2 - 1)$;
\item $-2 = (-\delta)^2 = (Bh - D \delta)^2$, \ie $D^2 = tB^2 + 1$;
\item $0 = -h\delta = (Ah - C\delta)(Bh - D\delta)$, \ie  $CD = t AB$.
\end{enumerate}
We deduce easily that $M$ can be of one of the following two forms:
$$
\left( \begin{array}{cc}
A & B\\
tB & A
\end{array} \right) \text{ or }
\left( \begin{array}{cc}
A & B\\
-tB & -A
\end{array} \right),\quad \text{with }A^2 - t B^2 = 1.
$$
Consider the abelian group
$$
N \coloneqq \left\{ 
\left( 
\begin{matrix}
A & B\\
tB & A
\end{matrix} 
\right) 
\,\Big|\, A, B \in \IZ, A^2 - t B^2 = 1 \right\}\subset O(\NS(S^{[2]}))
$$
and the element $s\coloneqq \left(\begin{matrix} 1 & 0 \\ 0 & -1\end{matrix}\right)\in O(\NS(S^{[2]}))$. 
It is easy to see that $O(\NS(S^{[2]}))$ is the generalized dihedral group of $N$:
$$
O(\NS(S^{[2]}))\cong\Dih(N)\cong N\rtimes \IZ/2\IZ
$$
where $\IZ/2\IZ=\langle s\rangle$ acts by conjugation on $N$.

\begin{rem} \label{rem:t_square}
If $t$ is a square, the solutions of the equation $A^2 - t B^2 = 1$ are $A=\pm 1$, $B = 0$
so $O(\NS(S^{[2]}))$ is isomorphic to the dihedral group with four elements $\set{\id, -\id, s, -s}$.
\end{rem}

\begin{rem}
The isometries of the lattice $\langle 2t\rangle\oplus \langle -2\rangle$ were computed in a different context 
in \cite{gil}, where Bini studies the automorphism group of a K3 surface 
of Picard number two and N\'eron--Severi group isometric to $\langle 2nt\rangle\oplus \langle -2n\rangle$, $n, t$ positive integers. 
\end{rem}

The next proposition shows that the non-trivial isometries of $\NS(S^{[2]})$ induced by automorphisms characterize
the ample cone of $S^{[2]}$ and {\it vice versa}. This very precise link will be the key to the full description
of the automorphism group of~$S^{[2]}$.

\begin{prop}\label{prop:type_of_auts} Let $S$ be an algebraic K3 surface such 
that $\Pic(S)=\IZ H$ with $H^2=2t$, $t\geq 1$ and $f\in\Aut(S^{[2]})$. 
If the isometry on $\NS(S^{[2]})$ induced by $f$ is not the identity then 
it is the involution represented in the basis $(h,-\delta$) by the matrix
$$
\left( \begin{matrix}
A & B\\
-tB & -A
\end{matrix} \right) \quad \text{with } A^2 - t B^2 = 1,\quad A > 0,\quad B < 0
$$
where $A$ and $B$ are uniquely determined by the ample cone of $S^{[2]}$, which is:
$$
\cA_{S^{[2]}} = \left\{ xh-y\delta\,|\,y  > 0, Ay  < -tBx \right\}.
$$
\end{prop}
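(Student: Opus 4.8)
The plan is to use the structure of $O(\NS(S^{[2]}))$ as a generalized dihedral group together with the fact that any $f \in \Aut(S^{[2]})$ must preserve the ample cone $\cA_{S^{[2]}}$. First I would recall that by the analysis of the previous section, $O(\NS(S^{[2]})) \cong \Dih(N)$, so an induced isometry $\varphi := (f^{-1})^\ast|_{\NS(S^{[2]})}$ is either of the form $\left(\begin{smallmatrix} A & B \\ tB & A \end{smallmatrix}\right)$ or of the form $\left(\begin{smallmatrix} A & B \\ -tB & -A \end{smallmatrix}\right)$ with $A^2 - tB^2 = 1$. The key observation is that $\varphi$ must send ample classes to ample classes (since $f$ is an automorphism), hence it preserves the cone $\cA_{S^{[2]}} \subseteq \{xh - y\delta \mid x > 0, y > 0\}$ established in Section~\ref{ss:grass}.

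Next I would rule out the first type: if $\varphi = \left(\begin{smallmatrix} A & B \\ tB & A \end{smallmatrix}\right)$ is non-trivial, then $(A,B) \neq (\pm 1, 0)$, so $t$ is not a square and $B \neq 0$; moreover $A > 0$ may be assumed after composing with $-\id$ if necessary (but $-\id$ sends the ample cone to its negative, so actually one must be more careful — I would argue directly). The point is that a matrix of the first type with $B \neq 0$ has eigenvectors $h \pm \frac{\sqrt t}{?}\,\delta$ corresponding to eigenvalues $A \pm B\sqrt t$, which are irrational slopes; such a map cannot preserve a rational polyhedral cone with two rational extremal rays (the ample cone has extremal ray $h$ which is rational, by Proposition~\ref{prop:nefcone}), because $\varphi(h) = Ah + tB\delta$ lies outside the region $\{y > 0\}$ when $B < 0$ and, when $B > 0$, iterating $\varphi$ produces classes whose slope converges to the irrational fixed ray, contradicting that the cone is closed under $\varphi$ and has rational boundary. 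So $\varphi$ must be of the second type, and being a non-trivial involution (its square is $(A^2 - tB^2)\id = \id$) it is a reflection; the condition that it preserves $\{x > 0, y > 0\}$ forces $A > 0$ and $B < 0$ (the cases $B = 0$ giving $s$ or $-s$, neither of which preserves $\{x>0,y>0\}$, and $B > 0$ being excluded similarly).

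Then I would pin down $A, B$ from the ample cone. The reflection $\varphi$ of the second type fixes pointwise the line spanned by its $+1$-eigenvector. Computing: $\left(\begin{smallmatrix} A & B \\ -tB & -A \end{smallmatrix}\right)$ has $+1$-eigenvector $(B, 1-A)$, i.e. the class $Bh - (1-A)\delta = Bh + (A-1)\delta$, equivalently (up to scaling, noting $A^2 - 1 = tB^2$) the ray through $\frac{-B}{A-1} = \frac{A+1}{-tB} \cdot$ — so the fixed ray has slope $y/x = (A-1)/(-B) = -tB/(A+1)$. Since $\varphi$ preserves $\cA_{S^{[2]}}$, this fixed ray must be one of the two extremal rays of the ample cone; the other extremal ray, $h$ (slope $0$), is swapped to $\varphi(h) = Ah - tB\delta$, slope $-tB/A$, which should therefore be the second extremal ray. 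This matches the description $\cA_{S^{[2]}} = \{xh - y\delta \mid y > 0,\ Ay < -tBx\}$: the boundary ray $Ay = -tBx$ has slope $-tB/A$, and one checks $h$ (where $y = 0$, $Ay = 0 < -tBx$ since $-tB > 0$) lies in the closure on the correct side, while the fixed ray of $\varphi$ bounds it on the other side. Conversely, the ample cone determines the boundary slope $-tB/A$ as a ratio of coprime-ish integers, and combined with $A^2 - tB^2 = 1$, $A > 0$, $B < 0$ this determines $(A,B)$ uniquely.

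The main obstacle I expect is the rigorous exclusion of the first type and the correct bookkeeping of signs: I must be careful that composing with $-\id$ is \emph{not} allowed for free (it reverses the ample cone), so the sign normalizations $A > 0$, $B < 0$ have to be forced honestly by the geometry of $\cA_{S^{[2]}} \subseteq \{x>0,y>0\}$ rather than assumed. The cleanest way to exclude the first type is probably the dynamical/irrationality argument: a non-trivial element of $N$ acts on the projectivized cone $\IP(\NS \otimes \IR)$ with two irrational fixed points (or none real, but here $t$ not a square gives real irrational ones), so it cannot map the rational extremal ray $\IR_{\geq 0} h$ into the interior and back to the closed cone — more precisely, $\varphi^n(h)$ would have to stay in $\overline{\cA_{S^{[2]}}}$ for all $n \geq 0$, but these slopes converge to an irrational number strictly inside the open cone (or escape it), contradicting that $h$ is extremal. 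I would also need to confirm via Proposition~\ref{prop:nefcone} that $h$ is always an extremal ray of $\cA_{S^{[2]}}$, which it is in all three cases listed there.
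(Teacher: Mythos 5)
Your overall skeleton matches the paper's: classify the candidate isometries into the two matrix types coming from $\Dih(N)$, use preservation of $\cA_{S^{[2]}}\subseteq\{x>0,y>0\}$ to kill the first type and fix the signs in the second, then read off the cone. Where you genuinely diverge is in how the first type is excluded. The paper argues by hand: the classes $ah-\delta$, $a\geq 3$, are known to be ample (Proposition~\ref{prop:ample_classes}), their images force $A>0$, $B>0$, the non-ample class $\varphi(h)=(A,tB)$ then bounds $\cA_{S^{[2]}}$ from above, and the ample class $\varphi(3h-\delta)$ violates that bound since $A^2>tB^2$. Your eigendirection argument --- a non-scalar element of $N$ has only the irrational eigendirections $(1,\pm\sqrt t)$, so it cannot fix or swap the rational extremal rays of the nef cone --- is also valid and arguably more conceptual; note that you only need rationality of the single extremal ray $h$ (if $\varphi$ swaps the rays then $\varphi^2$, still non-scalar, fixes $h$), so you can avoid invoking Proposition~\ref{prop:nefcone} here and rely only on ``$h$ is nef and non-ample'', keeping the proposition independent of Bayer--Macr\`i as in the paper. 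For the second type, the paper again computes directly: the rays through $\varphi(ah-\delta)$ are ample and accumulate on the ray of slope $-tB/A$, while $\varphi(h)$ is non-ample, which pins the cone exactly.

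There is one incorrect assertion you must repair in the second case: the $+1$-eigenray of the reflection, spanned by $(-B,A-1)$, is \emph{not} an extremal ray of $\cA_{S^{[2]}}$. Its slope is $-tB/(A+1)<-tB/A$, so it lies strictly inside the cone $\{y>0,\ Ay<-tBx\}$ (indeed it is the span of the ample class $D$ of square $2$ that drives the rest of the paper). Your sentence is also internally inconsistent: if the fixed ray were extremal, $\varphi$ would have to fix the other extremal ray $h$ as well, contradicting $\varphi(h)\neq h$. The correct statement, which you in effect use right afterwards, is that $\varphi$ permutes the two extremal rays and, since $\varphi(h)=(A,-tB)\neq h$, it swaps them; hence the second extremal ray is $\varphi(h)$ and the cone is $\{y>0,\ Ay<-tBx\}$. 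With that sentence replaced, and with the sign normalizations $A>0$, $B<0$ derived (as you indicate) by applying $\varphi$ to the explicitly known ample classes $ah-\delta$ rather than by composing with $-\id$, your argument goes through.
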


\begin{proof}
Recall that $\cA_{S^{[2]}} \subseteq \left\{ xh-y\delta\,|\,x > 0,y > 0 \right\}$.
As explained above, the isometry $\varphi$ induced by $f$ on $\NS(S^{[2]})$ can be of two forms.

\par{{\it First case.}} Assume that 
$\varphi=\left( \begin{matrix}
A & B\\
tB & A
\end{matrix} \right)
$
with $A^2 - tB^2 = 1$ and that $\varphi\neq \id$.
By Proposition~\ref{prop:ample_classes}, the divisors of coordinates $(a, 1)$ with $a \geq 3$ are ample, their images by $\varphi$
have coordinates $(aA + B, atB + A)$ and are ample since $\varphi$ is induced by an automorphism.
This implies immediately $A>0$, $B>0$  and since $\varphi(1,0)=(A,tB)$ is a non-ample class ($h$ is not ample) we get
\begin{equation*}
\cA_{S^{[2]}} \subseteq \left\{ xh-y\delta\,|\,y > 0,Ay < tBx \right\}.
\end{equation*}
The class $\varphi(3, 1)=(3A + B, 3tB + A)$ is ample but it does not satisfy the second inequality, contradiction.

\par{{\it Second case.}} Assume that 
$\varphi=\left( \begin{matrix}
A & B\\
-tB & -A
\end{matrix} \right)
$
with $A^2 - tB^2 = 1$.
Similarly, the classes  $\varphi(a,1)=(aA + B, -atB - A)$ for $a \geq 3$ are ample so $A>0$ and $B<0$ (it is obvious that $B\neq 0$). All the rays
$y=\frac{-atB-A}{aA+B}x$ are contained in $\cA_{S^{[2]}}$, their limit for $a$ big enough is the ray $y=\frac{-tB}{A}x$ so
\begin{equation*}
\cA_{S^{[2]}} \supseteq \left\{ xh-y\delta\,|\,y > 0,Ay < -tBx \right\}.
\end{equation*}
As above the class $\varphi(1,0)=(A,-tB)$ is non-ample  so we get the result.
\end{proof}

\section{The automorphism group of $S^{[2]}$}

As a direct consequence of Proposition~\ref{prop:type_of_auts} we get a first result on the automorphism group of $S^{[2]}$.

\begin{prop}\label{prop: nonsympl inv} Let $S$ be an algebraic K3 surface such 
that $\Pic(S)=\IZ H$ with $H^2=2t$, $t\geq 1$. 
\begin{enumerate}
\item If $t\geq 2$ then the group $\Aut(S^{[2]})$ is either trivial
or isomorphic to $\IZ/2\IZ$, in which case it is generated by a non-symplectic involution.
\item If $t=1$ then $\Aut(S^{[2]})=\{\id_{S^{[2]}},\iota^{[2]}\}\cong \IZ/2\IZ$.
\end{enumerate}
\end{prop}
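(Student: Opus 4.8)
The plan is to combine Proposition~\ref{prop:type_of_auts} with Lemma~\ref{lem:AutHilbS} and Lemma~\ref{lem:AutS}. For part~(1), assume $t\geq 2$ and let $f\in\Aut(S^{[2]})$ be a non-trivial automorphism. By Lemma~\ref{lem:AutHilbS} the restriction map $\Psi$ is injective, so the isometry $\varphi=(f^{-1})^\ast|_{\NS(S^{[2]})}$ is non-trivial. By Proposition~\ref{prop:type_of_auts}, $\varphi$ is an involution of the second type, hence $\varphi^2=\id$; injectivity of $\Psi$ then forces $f^2=\id$, so $f$ is an involution. Thus $\langle f\rangle\cong\IZ/2\IZ$ and, because $\Psi$ is injective, any automorphism group of $S^{[2]}$ embeds into the subgroup of $O(\NS(S^{[2]}))$ consisting of the identity and this one involution; hence $\Aut(S^{[2]})$ is either trivial or equal to $\langle f\rangle\cong\IZ/2\IZ$. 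To see that $f$ is non-symplectic: the full isometry $(f^{-1})^\ast$ on $H^2(S^{[2]},\IZ)=\NS(S^{[2]})\oplus T$ acts on $\NS$ with eigenvalues $+1$ and $-1$ (the matrix in the second case has trace $A-A=0$ and determinant $-1$), in particular it is not $\pm\id$ on $\NS$; a symplectic automorphism of an irreducible holomorphic symplectic manifold acts trivially on the transcendental part and, in our Picard rank $2$ situation, the only possibility compatible with preserving the Hodge structure and a Kähler class would contradict the non-triviality on $\NS$, so $f$ cannot be symplectic. (Alternatively one invokes that a non-trivial finite-order automorphism acting as $-\id$ on some part of $\NS$ cannot be symplectic since symplectic automorphisms act trivially on $\NS$ for $\rho=2$.)

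For part~(2), assume $t=1$. By Lemma~\ref{lem:AutS}\eqref{lem:AutS_item2}, $S$ is the double cover of $\IP^2$ branched along a smooth sextic, and $\Aut(S)=\{\id_S,\iota\}$ with $\iota$ the covering involution. The natural automorphism $\iota^{[2]}$ is then a non-trivial element of $\Aut(S^{[2]})$, so by part~(1) we already know $\Aut(S^{[2]})\cong\IZ/2\IZ$ and it is generated by $\iota^{[2]}$; this gives $\Aut(S^{[2]})=\{\id_{S^{[2]}},\iota^{[2]}\}$. One should check that $\iota^{[2]}\neq\id_{S^{[2]}}$, which is immediate since $\iota^{[2]}$ acts on a generic length-two subscheme $\{p,q\}$ by $\{\iota(p),\iota(q)\}$ and $\iota$ is non-trivial on $S$.

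The only genuinely delicate point is the assertion that the involution in part~(1) is \emph{non-symplectic}; everything else is a formal consequence of the earlier propositions. The cleanest way to handle it is to record that the isometry of the second type in Proposition~\ref{prop:type_of_auts} restricts to $-\id$ on the rank-one sublattice spanned by $Bh+\big(A+1\big)\delta$ (an eigenvector for the eigenvalue $-1$), so $(f^{-1})^\ast$ is not the identity on $\NS(S^{[2]})$; since an automorphism of a projective irreducible holomorphic symplectic manifold of Picard number $2$ is symplectic only if it acts trivially on $\NS$ (the invariant lattice of a non-symplectic automorphism necessarily contains a polarization and has corank at least one, or equivalently a symplectic automorphism fixes $\NS$ pointwise when $\rho=2$), $f$ is non-symplectic. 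I would phrase this as a short paragraph rather than invoking a heavy structural theorem, since in rank $2$ the statement is elementary.
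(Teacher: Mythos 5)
Your treatment of the group structure for $t\geq 2$ is exactly the paper's: injectivity of $\Psi$ (Lemma~\ref{lem:AutHilbS}) together with Proposition~\ref{prop:type_of_auts} confines the image of $\Psi$ to $\{\id,\varphi_0\}$ for a single involution $\varphi_0$ determined by the ample cone, so $\Aut(S^{[2]})$ is trivial or $\IZ/2\IZ$. There are, however, two genuine gaps.

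The first is the non-symplecticity claim. The principle you assert, that a symplectic automorphism of a projective irreducible holomorphic symplectic manifold with $\rho=2$ must act trivially on $\NS$, is not elementary and is not established by anything you write. What is true in general is that for a finite-order symplectic automorphism the coinvariant lattice is negative definite and contained in $\NS$; here the $(-1)$-eigenspace of $\varphi_0$ in $\NS(S^{[2]})$ is spanned by your class $Bh+(A+1)\delta$, which has square $-4(A+1)<0$. So neither the signature, nor the Hodge structure, nor the existence of an invariant ample class excludes a symplectic involution acting non-trivially on $\NS$ in rank $2$. The input one actually needs is the structural theorem the paper cites from Mongardi: the coinvariant lattice of a symplectic involution on a manifold of $K3^{[2]}$ type is $E_8(2)$, of rank $8$, forcing $\rank\NS(S^{[2]})\geq 8$. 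Your parenthetical justification concerns the invariant lattice of a \emph{non-symplectic} automorphism and cannot be used to rule out a symplectic one; and the classical Nikulin-type argument (an effective $(-2)$-class in the coinvariant lattice gives an orbit summing to an effective class equal to zero) does not apply either, since the relevant primitive class does not have square $-2$ in general.

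The second gap is the case $t=1$. You conclude $\Aut(S^{[2]})\cong\IZ/2\IZ$ ``by part~(1)'', but part~(1) is stated, and proved by you, only for $t\geq 2$; its proof rests on the injectivity of $\Psi$, which fails for $t=1$ since $\Ker\Psi\cong\Aut(S)\cong\IZ/2\IZ$. Exhibiting the non-trivial element $\iota^{[2]}$ gives a lower bound, not the upper bound. The missing step is that the image of $\Psi$ is trivial when $t=1$: by Remark~\ref{rem:t_square} the only solutions of $A^2-tB^2=1$ have $B=0$ when $t$ is a square, while Proposition~\ref{prop:type_of_auts} forces $B<0$ for any non-identity isometry induced by an automorphism. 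Combining this with $\Ker\Psi\cong\Aut(S)=\{\id_S,\iota\}$ yields $\Aut(S^{[2]})=\{\id_{S^{[2]}},\iota^{[2]}\}$, with $\iota^{[2]}$ non-symplectic because $\iota$ is.
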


\begin{proof}
\par If $t\geq 2$, by Lemma~\ref{lem:AutHilbS} the map $\Psi\colon\Aut(S^{[2]})\to O(\NS(S^{[2]}))$ 
is injective. It follows from Proposition~\ref{prop:type_of_auts}  that $\Aut(S^{[2]})$ is either trivial,
or isomorphic to $\IZ/2\IZ$. By \cite[Theorem~4.1]{mongardi} symplectic involutions can exist only when $\rank \NS(S^{[2]}) \geq 8$, so here the non-trivial involution is necessarily non-symplectic.

\par If $t = 1$, by Remark~\ref{rem:t_square} we observe that the only isometry of $O(\NS(S^{[2]}))$ which is induced by 
an automorphism of $S^{[2]}$ is the identity so the map $\Psi$ is trivial.
By Lemmas~\ref{lem:AutS}~\&~\ref{lem:AutHilbS} we get that $\Aut (S^{[2]})$ is isomorphic to $\IZ/2\IZ$, generated by
the involution $\iota^{[2]}$ that is clearly non-symplectic since $\iota$ is non-symplectic.
\end{proof}

From now on we assume that $t\geq 2$ since the case $t=1$ is completely solved by Proposition~\ref{prop: nonsympl inv}.

\subsection{Classes of square two} 

Consider an isometry of $\NS(S^{[2]})$ of the form
\begin{equation}\label{eq: isometries}
\varphi=\left( \begin{matrix}
A & B\\
-tB & -A
\end{matrix} \right), \quad \text{with } A^2 - t B^2 = 1, A > 0, B < 0.
\end{equation}
A direct computation shows that the invariant sublattice of $\NS(S^{[2]})$ for the action of~$\varphi$ is generated by
the vector $(b,a)\coloneqq\frac{1}{d} (-B, A - 1)$ where  $d=\gcd(B,A-1)$ and that $\varphi$ is 
the reflection in the line generated by the vector~$(b,a)$.

If $f\in\Aut(S^{[2]})$ is a non-symplectic involution  the invariant lattice $T(f)$
is a primitive sublattice of $\NS(S^{[2]})$, hence its orthogonal complement $T(f)^\perp$ in $H^2(S,\IZ)$ contains the transcendental
lattice $\Trans(S^{[2]})$. It follows that the isometry $f^\ast$ induced by~$f$ on $H^2(S^{[2]},\IZ)$ is such that
$f^\ast\left|_{\Trans(S^{[2]})}\right. = -\id_{\Trans(S^{[2]})}$. 

\begin{lemma}\label{lem:extending_involutions}
Let $\varphi$ be an involution on $\NS(S^{[2]})$, represented by a matrix of the form~{\rm(\ref{eq: isometries})}.
Then $\varphi$ extends to an involution $\Phi$ on $H^2(S^{[2]}, \IZ)$ such that 
$$
\Phi\left|_{\Trans(S^{[2]})}\right. = -\id_{\Trans(S^{[2]})}
$$ 
if and only if $\varphi$ is the 
reflection through a class of square $2$.
\end{lemma}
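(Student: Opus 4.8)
The plan is to use the classical gluing criterion for extending isometries of a primitive sublattice to the whole lattice, in the form of Nikulin's theory of discriminant forms. Write $\Lambda := H^2(S^{[2]},\IZ) \cong U^{\oplus 3}\oplus E_8^{\oplus 2}\oplus \langle -2\rangle$, and let $T := \Trans(S^{[2]})$ and $N := \NS(S^{[2]})$, which form a pair of primitive orthogonal complements in $\Lambda$. The isometry $-\id_T$ acts on the discriminant group $A_T = T^\vee/T$ as $-\id$. By Nikulin, an isometry $\varphi$ of $N$ together with $-\id_T$ glue to an isometry $\Phi$ of $\Lambda$ if and only if the induced maps on the discriminant forms agree under the canonical anti-isometry $\gamma\colon A_N \xrightarrow{\sim} A_T$; that is, if and only if $\bar\varphi = -\id$ on $A_N$. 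So the statement reduces to: \emph{$\varphi$ acts as $-\id$ on the discriminant group $A_N$ if and only if $\varphi$ is the reflection in a class of square $2$.}

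First I would make the discriminant group of $N$ explicit. Since $N$ has Gram matrix $\mathrm{diag}(2t,-2)$ in the basis $(h,-\delta)$, we have $A_N \cong \IZ/2t\IZ \oplus \IZ/2\IZ$, generated by $\tfrac{1}{2t}h$ and $\tfrac{1}{2}\delta$. An involution $\varphi$ as in~(\ref{eq: isometries}) is a reflection $r_v$ in a primitive class $v=(b,a)=\frac{1}{d}(-B,A-1)$; the reflection formula gives $r_v(x) = x - \frac{2(x,v)}{v^2}v$, and this is an \emph{integral} isometry precisely (as already computed just before the lemma) for the $v$ arising this way, with $v^2 = \frac{2}{d^2}(tB^2 - (A-1)^2\cdot(-1))$, wait — more simply $v^2 = b^2\cdot 2t + a^2\cdot(-2) = \tfrac{2}{d^2}(tB^2 - (A-1)^2)$. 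Using $A^2 - tB^2 = 1$, i.e. $tB^2 = A^2-1 = (A-1)(A+1)$, this simplifies to $v^2 = \tfrac{2}{d^2}(A-1)(A+1) - \tfrac{2}{d^2}(A-1)^2 = \tfrac{2}{d^2}(A-1)\cdot 2 = \tfrac{4(A-1)}{d^2}$. So the reflection is "through a class of square $2$" iff $d^2 = 2(A-1)$.

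Then I would compute $\bar\varphi$ on $A_N = N^\vee/N$ directly: a reflection $r_v$ acts on the discriminant group, and $r_v = -\id$ on $A_N$ iff $r_v(x) + x \in N$ for all $x\in N^\vee$, i.e. iff $\frac{2(x,v)}{v^2}v \equiv 2x \pmod N$ for all $x \in N^\vee$. Equivalently, the orthogonal complement of $v$ in $A_N$ must be trivial and $v$ must generate a suitable subgroup — concretely, $r_v=-\id$ on $A_N$ iff $v^\perp \cap N^\vee \subseteq N + \IZ v$, which after unwinding is a divisibility condition on $v^2$ relative to $\mathrm{disc}(N) = -4t$. Running this computation, the condition "$r_v = -\id$ on $A_N$" becomes exactly $v^2 = 2$ (up to sign of the discriminant form, and using that $N$ has rank $2$ so $v^\perp$ is rank $1$ spanned by the reflected orthogonal vector): a rank-two lattice reflection is $-\id$ on the discriminant group iff the two rank-one pieces $\IZ v$ and $\IZ v^\perp$ have discriminant groups exchanged by $\gamma$ compatibly, forcing $|v^2| = 2$; since $v^2 = \frac{4(A-1)}{d^2} > 0$ we get $v^2 = 2$.

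The main obstacle I expect is the bookkeeping in the second computation: carefully relating $r_v = -\id$ on $A_N$ to the numerical condition $v^2 = 2$, since $N$ is not unimodular and one must track the glue vectors and the anti-isometry $\gamma$ precisely rather than hand-waving. One clean way around the worst of it is to argue via overlattices: $r_v$ is the identity on $v^\perp$ and $-\id$ on $\IZ v$, so $r_v$ acts as $-\id$ on $A_N$ iff the finite-index sublattice $\IZ v \oplus v^\perp \subseteq N$ has the property that the glue group is $(-\id)$-invariant, which for a primitive $v$ in a rank-$2$ lattice of discriminant $-4t$ forces $|v^2| \cdot |v^{\perp 2}| $ and the glue structure to be symmetric — and the only solution with $v^2>0$ dividing things correctly is $v^2 = 2$. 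Once the equivalence $[\,r_v=-\id \text{ on } A_N\,] \Leftrightarrow [\,v^2 = 2\,]$ is established, Nikulin's criterion~\cite[Corollary~1.5.2]{nikulin} (extension of orthogonal isometries across a primitive embedding, which here is always possible since the embedding $N\hookrightarrow\Lambda$ is primitive with complement $T$) yields the existence of the desired $\Phi$, and $\Phi^2 = \id$ holds because both $\varphi$ and $-\id_T$ are involutions and $\Phi$ is determined by them on the finite-index sublattice $N\oplus T$.
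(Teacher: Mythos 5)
Your route through Nikulin's discriminant-form gluing is genuinely different from the paper's proof, which works by an explicit integrality computation inside the rank-three sublattice $U\oplus\langle-2\rangle$ containing $h$ and $\delta$. However, the gluing criterion you invoke is misstated, and this is a real gap. The ambient lattice $H^2(S^{[2]},\IZ)\cong U^{\oplus 3}\oplus E_8^{\oplus 2}\oplus\langle-2\rangle$ is \emph{not} unimodular, so there is no canonical anti-isometry between the discriminant groups of $\cN\coloneqq\NS(S^{[2]})$ and $T\coloneqq\Trans(S^{[2]})$; indeed $A_{\cN}\cong\IZ/2t\IZ\oplus\IZ/2\IZ$ has order $4t$ while $A_{T}\cong\IZ/2t\IZ$ has order $2t$, so they are not even abstractly isomorphic. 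The correct criterion is that $(\bar\varphi,-\id_{A_T})$ must preserve the glue group $H^2(S^{[2]},\IZ)/(\cN\oplus T)$, which sits inside $A_{\cN}\oplus A_{T}$ as the graph of an isomorphism from a subgroup $H_{\cN}\leq A_{\cN}$ onto $A_{T}$. Since $\delta$ spans a direct summand of $H^2(S^{[2]},\IZ)$ and all the gluing happens inside the unimodular K3 lattice, $H_{\cN}$ is the index-two subgroup generated by the class of $h/2t$, and the condition for $\Phi$ to exist is $\bar\varphi=-\id$ on $H_{\cN}$ only --- a priori strictly weaker than $\bar\varphi=-\id$ on all of $A_{\cN}$. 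In the ``only if'' direction of the lemma your argument therefore concludes too much from the existence of $\Phi$, and the chain of equivalences breaks.

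The gap is repairable, but the repair requires an arithmetic step you have not supplied. Writing $\varphi=r_v$ with $v=bh-a\delta$ primitive and $v^2=2m>0$, one has $r_v(x)+x=\frac{(x,v)}{m}\,v$, so $\bar r_v=-\id$ on $H_{\cN}=\langle h/2t\rangle$ if and only if $m\mid b$, whereas $\bar r_v=-\id$ on all of $A_{\cN}$ if and only if $m\mid\gcd(a,b)=1$. Fortunately $m\mid b$ together with $m=tb^2-a^2$ forces $m\mid a^2$, and $\gcd(a^2,b)=1$ then gives $m=1$, i.e.\ $v^2=2$; so the weak and strong conditions happen to coincide for primitive $v$ and the lemma survives, but this must be proved rather than bypassed by the unimodular criterion. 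Separately, your justification of the equivalence ``$\bar r_v=-\id$ on $A_{\cN}$ iff $v^2=2$'' via exchanged rank-one pieces is muddled; the clean argument is that $(\cN^\vee,v)=\IZ$ for primitive $v$, so $\frac{2(x,v)}{v^2}v\in\cN$ for all $x\in\cN^\vee$ exactly when $\frac{v^2}{2}$ divides $1$.
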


\begin{proof}
By \cite[Theorem~1.14.4]{nikulin} the lattice $\Pic(S)=\IZ H$ has a unique primitive embedding in $H^2(S,\IZ)\cong U^{\oplus 3}\oplus E_8^{\oplus 2}$ up to isometry. 
Denoting by $(e,f)$ a basis of the first factor $U$ we can thus assume that it is given by $H\mapsto e+tf$. Since we are working on
$S^{[2]}$ (and not on a deformation of it) we consider the embedding of $\NS(S^{[2]})$ in $H^2(S^{[2]},\IZ)$ given by $h\mapsto e+tf$.

Assume that $\varphi$ extends to an isometry $\Phi$ of $H^2(S^{[2]},\IZ)$ and look at the action on the factor $U \oplus \langle -2\rangle$
with basis $(e+tf,f,-\delta)$. Since the class $w\coloneqq e-tf$ is orthogonal to $\NS(S^{[2]})$ we have $\Phi(w)=-w$.
Writing $w=(e+tf)-2tf$ we get the relation
$$
2t\Phi(f)=(A+1)(e+tf)+tB\delta-2tf.
$$
Recall that $\varphi$ is the reflection in the span of the primitive vector $(b,a)$. An explicit computation gives 
$A=\displaystyle\frac{tb^2+a^2}{tb^2-a^2}$ and $B=\displaystyle\frac{-2ab}{tb^2-a^2}$, hence
$$
\Phi(f)=\frac{b^2}{tb^2-a^2}(e+tf)-\frac{ab}{tb^2-a^2}\delta-f.
$$
It follows that $\frac{b^2}{tb^2-a^2}=k\in\IZ$. We get $(kt-1)b^2=ka^2$ and since $a$ and $b$ are coprime, $b^2$ divides
$k$. This implies
$tb^2-a^2=1$ (since $A>0$) so $(b,a)$ is a class of square $2$.

Conversely, if $(b,a)$ is a class of square $2$ the above computation shows that~$\varphi$ extends
to an isometry $\Phi$ of $U\oplus\langle -2\rangle$. We extend it as $-\id$ to the 
remaining factors $U^{\oplus 2}\oplus E_8^{\oplus 2}$ and we get the result.
\end{proof}

\begin{lemma}\label{lem:2class_exists} Let $S$ be an algebraic K3 surface such 
that $\Pic(S)=\IZ H$, $H^2=2t$, $t\geq 2$. If $f\in\Aut(S^{[2]})$ is not the identity, then its action
on $\NS(S^{[2]})$ is the reflection in the span of a class of square $2$.
\end{lemma}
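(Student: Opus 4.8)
The plan is to combine the structural results already established. By Proposition~\ref{prop:type_of_auts}, if $f\in\Aut(S^{[2]})$ is not the identity, then the isometry $\varphi$ it induces on $\NS(S^{[2]})$ is of the form~\eqref{eq: isometries}, that is, a reflection in the span of a primitive vector $(b,a)=\frac{1}{d}(-B,A-1)$ with $A^2-tB^2=1$, $A>0$, $B<0$. What remains is to pin down the self-intersection of $(b,a)$, and the natural route is to use that $f$ is a non-symplectic involution (from Proposition~\ref{prop: nonsympl inv}(1)) together with Lemma~\ref{lem:extending_involutions}.

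First I would invoke Proposition~\ref{prop: nonsympl inv}(1): since $t\geq 2$ and $\Aut(S^{[2]})$ is non-trivial, $f$ is a non-symplectic involution. In particular $f^\ast$ acts on $H^2(S^{[2]},\IZ)$ and, by the remark preceding Lemma~\ref{lem:extending_involutions}, its restriction to the transcendental lattice $\Trans(S^{[2]})$ is $-\id$. Thus $\Phi\coloneqq f^\ast$ is a genuine extension of $\varphi$ to $H^2(S^{[2]},\IZ)$ satisfying the hypothesis $\Phi|_{\Trans(S^{[2]})}=-\id_{\Trans(S^{[2]})}$. Since $\varphi$ has order $2$ (being induced by the involution $f$, and $\Psi$ is injective by Lemma~\ref{lem:AutHilbS}), $\varphi$ is an involution of the required shape, so Lemma~\ref{lem:extending_involutions} applies verbatim.

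Applying the ``only if'' direction of Lemma~\ref{lem:extending_involutions}, the existence of such an extension $\Phi$ forces $\varphi$ to be the reflection through a class of square $2$. That is exactly the assertion: the action of $f$ on $\NS(S^{[2]})$ is the reflection in the span of a class $(b,a)$ with $(bh-a\delta)^2=2tb^2-2a^2=2$, i.e. $tb^2-a^2=1$.

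The only genuine subtlety is checking that the hypotheses of Lemma~\ref{lem:extending_involutions} are literally met: one must confirm that the involution $\varphi$ indeed has matrix of the form~\eqref{eq: isometries} (which is the content of Proposition~\ref{prop:type_of_auts}, using that the second ``non-identity'' case there is the only possibility) and that $f^\ast$ restricted to the orthogonal complement of $\NS(S^{[2]})$ in $H^2(S^{[2]},\IZ)$ — which contains $\Trans(S^{[2]})$ — acts as $-\id$ on the transcendental part. This last point is the standard fact that a non-symplectic involution acts by $-\id$ on the transcendental lattice; since $f$ preserves the holomorphic symplectic form up to scalar and is not symplectic, the scalar is $-1$, and $\Trans(S^{[2]})$ is the minimal Hodge substructure containing the class of the symplectic form, so $f^\ast=-\id$ there. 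With that in hand, the proof is a one-line citation of Lemma~\ref{lem:extending_involutions}. I do not expect any computational obstacle; the argument is purely a matter of assembling the earlier pieces in the right order.
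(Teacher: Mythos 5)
Your proof is correct, but it takes a different route from the paper's. You reduce the statement to the ``only if'' direction of Lemma~\ref{lem:extending_involutions}: since $f$ is a non-symplectic involution (Proposition~\ref{prop: nonsympl inv}), $\Phi\coloneqq f^\ast$ is an honest extension of $\varphi$ to $H^2(S^{[2]},\IZ)$ acting as $-\id$ on $\Trans(S^{[2]})$, and the lattice computation inside that lemma then forces $tb^2-a^2=1$, i.e.\ $(bh-a\delta)^2=2$. The paper instead argues directly on the invariant lattice: having identified $T(f)\cap\NS(S^{[2]})$ as the rank-one lattice generated by the primitive vector $(b,a)$ via Proposition~\ref{prop:type_of_auts}, it cites \cite[Lemma~8.1]{BoissiereCamereSarti} to the effect that the invariant lattice of a non-symplectic involution is $2$-elementary and contains a positive class, which for an even rank-one lattice immediately gives square $2$. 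Your version is more self-contained --- it replaces the external citation by the explicit computation already carried out in the proof of Lemma~\ref{lem:extending_involutions}, and it explains why the ``only if'' implication of that lemma is worth stating at all (the paper only ever uses the ``if'' direction, in the proof of Theorem~\ref{thm: main thm}) --- at the cost of being slightly longer and of relying on the identification, via Nikulin's uniqueness of the primitive embedding, of the concrete model $h\mapsto e+tf$ with the actual embedding $\NS(S^{[2]})\subset H^2(S^{[2]},\IZ)$; that identification is set up in the proof of Lemma~\ref{lem:extending_involutions}, so your one-line citation is legitimate. The paper's version is shorter but imports a structural result about invariant lattices of non-symplectic involutions on irreducible holomorphic symplectic manifolds. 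Both arguments correctly use Proposition~\ref{prop:type_of_auts} to put $\varphi$ in the form~(\ref{eq: isometries}) and Proposition~\ref{prop: nonsympl inv} to know $f$ is a non-symplectic involution, so no step is missing.
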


\begin{proof}
By Lemma~\ref{lem:AutHilbS} and Proposition~\ref{prop:type_of_auts} the isometry induced by $f$ on $\NS(S^{[2]})$ has the form~(\ref{eq: isometries}).
By Proposition~\ref{prop: nonsympl inv} the  involution $f$ is non-symplectic so the invariant lattice $T(f)\subset H^2(S^{[2]},\IZ)$
is a primitive sublattice of $\NS(S^{[2]})$, hence~$T(f)$ is the lattice generated by $(b,a)$.
Moreover by~\cite[Lemma~8.1]{BoissiereCamereSarti} the lattice $T(f)$ is $2$-elementary and 
contains a positive class. It follows that $(b,a)$ has square $2$. 
\end{proof}

\begin{rem}\label{rem:isometry2class}
As a consequence of Lemma~\ref{lem:extending_involutions} and its proof, using the explicit formula for $A$ and $B$ we get ${d=2a}$, hence $A=2a^2+1$ and $B=-2ab$
with $a^2-tb^2=-1$, $a>0$, $b>0$. So the isometry of $\NS(S^{[2]})$ induced by a non-trivial automorphism is
$$
\left( \begin{matrix}
2 a^2 + 1 & -2ab\\
2tab & -2a^2 - 1
\end{matrix} \right)
$$
where $(a,b)$ is a solution of  Pell's equation $P_t(-1)$. This shows that non-trivial automorphisms cannot exist
when $t$ is such that the equation $P_t(-1)$ has no solution. This implies in particular that $t$ is not a square and that the period of the continued fraction expansion of $\sqrt{t}$ is odd.
\end{rem}

\subsection{Main result}\label{ss:main}

The main result of this section is the following theorem which, 
together with Proposition~\ref{prop: nonsympl inv}, gives a complete description of the automorphism
group of $S^{[2]}$ for any value of~$t$.

\begin{theorem}\label{thm: main thm} Let $S$ be an algebraic K3 surface such 
that $\Pic(S)=\IZ H$ with $H^2=2t$, $t\geq 2$. Then $S^{[2]}$ admits a non-trivial automorphism
if and only if one of the following equivalent conditions is satisfied:
\begin{enumerate}
\item\label{item1} $t$ is not a square,  Pell's equation $P_{4t}(5)$ has no solution and
Pell's equation $P_t(-1)$ has a solution.
\item\label{item2} There exists an ample class $D \in \NS(S^{[2]})$ such that $D^2 = 2$.
\end{enumerate}
Moreover, if this is the case the class $D$ is unique, the automorphism is unique and it is a non-symplectic involution.
\end{theorem}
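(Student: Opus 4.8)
The plan is to prove the two conditions equivalent and then establish uniqueness, leveraging the dictionary between automorphisms, isometries of $\NS(S^{[2]})$, and the ample cone set up in the earlier sections.

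\textbf{Equivalence of (1) and (2).} I would first translate condition (2) into Pell's equations. A class $D=xh-y\delta\in\NS(S^{[2]})$ has $D^2=2$ iff $2tx^2-2y^2=2$, i.e.\ $y^2-tx^2=-1$, so the existence of a square-$2$ class is equivalent to solvability of $P_t(-1)$ (which forces $t$ non-square). Thus (1) and (2) differ only by the ample-cone condition. Using Proposition~\ref{prop:nefcone}: when $P_t(-1)$ has a solution but $P_{4t}(5)$ also has one, the nef cone is spanned by $h$ and $xh-2ty\delta$ with $(x,y)$ minimal for $P_{4t}(5)$, and I would check directly that a square-$2$ class coming from a solution of $P_t(-1)$ never lies strictly inside this cone (comparing slopes $a/(tb)$ for $a^2-tb^2=-1$ against the slope $x/(2ty)$ of the boundary ray; here Remark~\ref{rem: cone and t} guarantees $P_t(1)$ and $P_{4t}(5)$ have no common solution, which pins down which case we are in). Conversely, when $P_{4t}(5)$ has no solution, the nef cone is spanned by $h$ and $xh-ty\delta$ with $(x,y)$ minimal for $P_t(1)$, and using Lemma~\ref{lem:Pell} (so $\alpha=2a^2+1$, $\beta=2ab$) one computes that the square-$2$ class $bh-a\delta$ attached to the minimal solution of $P_t(-1)$ has slope $a/(tb)$ strictly between those of $h$ (slope $0$) and $xh-ty\delta$, hence is ample. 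This shows (1)$\Leftrightarrow$(2).

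\textbf{From (1)/(2) to existence of an automorphism.} Assume (2) holds, with $D$ an ample class of square $2$. The reflection $\varphi$ in the span of $D$ is an isometry of $\NS(S^{[2]})$ of the form~(\ref{eq: isometries}) (by the discussion preceding Lemma~\ref{lem:extending_involutions}, with $(b,a)$ now a square-$2$ class), so by Lemma~\ref{lem:extending_involutions} it extends to an involution $\Phi$ of $H^2(S^{[2]},\IZ)$ acting as $-\id$ on $\Trans(S^{[2]})$. Since $D$ is ample, $\varphi$ preserves the ample cone, so $\Phi$ is a Hodge isometry preserving a Kähler class; by the Torelli theorem for irreducible holomorphic symplectic manifolds (and the fact that $\Phi$ fixes no class of the form needed to be a wall divisor, or rather: $\Phi$ preserves the ample cone hence is induced by an automorphism) there is $f\in\Aut(S^{[2]})$ with $(f^{-1})^\ast=\Phi$. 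Then $f$ is the desired non-trivial, non-symplectic involution. Conversely, if $\Aut(S^{[2]})$ is non-trivial, Lemma~\ref{lem:2class_exists} gives that the induced isometry is the reflection in a square-$2$ class, and Proposition~\ref{prop:type_of_auts} forces this class to lie in the ample cone, giving (2).

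\textbf{Uniqueness.} For uniqueness of $D$: two ample square-$2$ classes would, by the above, give two involutions with distinct reflection hyperplanes, but Proposition~\ref{prop: nonsympl inv} says $\Aut(S^{[2]})\cong\IZ/2\IZ$ at most; alternatively, and more directly, I would show that $\NS(S^{[2]})$ contains at most one ample class of square $2$ up to sign by a slope/Pell argument — any square-$2$ class is $\pm(bh-a\delta)$ with $a^2-tb^2=-1$, and by Remark~\ref{rem:solPell} all positive solutions are odd powers $z^{2n+1}$ of the minimal one, whose slopes $a/(tb)$ increase with $n$; only the minimal solution can have slope below the nef boundary, so $D=bh-a\delta$ with $(a,b)$ the minimal solution of $P_t(-1)$. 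Uniqueness of the automorphism is then immediate from injectivity of $\Psi$ (Lemma~\ref{lem:AutHilbS}) since the reflection in $D$ is determined.

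\textbf{Main obstacle.} I expect the delicate point to be the ample-cone comparison in the equivalence: one must carefully match the minimal solution of $P_t(-1)$ (governing square-$2$ classes via Remark~\ref{rem:isometry2class}) against the boundary ray of $\cA_{S^{[2]}}$ described by $P_{4t}(5)$ or $P_t(1)$ in Proposition~\ref{prop:nefcone}, and show the two scenarios ``$P_{4t}(5)$ solvable'' versus ``not'' correspond exactly to ``$D$ not ample'' versus ``$D$ ample'' — this is where Lemma~\ref{lem:Pell}, Remark~\ref{rem:solPell} and the non-coincidence of solutions from Remark~\ref{rem: cone and t} all have to be combined. The appeal to Torelli for the existence direction is standard but must be invoked in the form guaranteeing that an ample-cone-preserving Hodge isometry is induced by a (unique) automorphism.
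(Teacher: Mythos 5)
Your overall architecture matches the paper's: the same dictionary between square-$2$ classes and $P_t(-1)$, Lemma~\ref{lem:extending_involutions} plus the Markman--Verbitsky Torelli theorem for existence, Lemma~\ref{lem:2class_exists} and Proposition~\ref{prop:type_of_auts} for the converse, and Remark~\ref{rem:solPell} with a slope comparison for the uniqueness of $D$. The one place where you genuinely diverge is the implication $(2)\Rightarrow(1)$, and that is where your proposal has a real gap.

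You propose to show directly that when $P_{4t}(5)$ has a solution, a square-$2$ class $bh-a\delta$ (with $a^2-tb^2=-1$) ``never lies strictly inside'' the cone generated by $h$ and $xh-2ty\delta$, i.e.\ that $ax\ge 2tby$ for the minimal solution $(x,y)$ of $P_{4t}(5)$ and every positive solution $(a,b)$ of $P_t(-1)$. This is a comparison between the minimal solutions of two \emph{different} Pell equations, and it is not a routine slope computation: already for $t=5$ one has $(a,b)=(2,1)$ and $(x,y)=(5,1)$, so $ax=10=2tby$ and the class $h-2\delta$ sits \emph{exactly} on the wall --- the inequality is tight, so nothing soft will give it. A correct direct proof exists (e.g.\ setting $w=x+2y\sqrt t$, $z=a+b\sqrt t$, one shows $\bar w z^2$ is again a positive solution of $P_{4t}(5)$, whence $w\le z\sqrt5$ by minimality, which is equivalent to $ax\ge 2tby$), but you have not supplied it, and as stated the step is an unverified assertion on which the equivalence $(1)\Leftrightarrow(2)$ rests. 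The paper avoids this entirely: it deduces $(1)$ from the \emph{existence of the automorphism} rather than from $(2)$ --- by Proposition~\ref{prop:type_of_auts} the induced isometry pins the ample cone to the ray $(A,-tB)$ with $A^2-tB^2=1$, so the cone is the $P_t(1)$-cone of Proposition~\ref{prop:nefcone}, and Remark~\ref{rem: cone and t} (no common solution of $P_t(1)$ and $P_{4t}(5)$) then forces $P_{4t}(5)$ to be unsolvable. I would either adopt that geometric detour (you already have all the ingredients in your ``automorphism $\Rightarrow(2)$'' paragraph) or write out the Pell argument above in full; as it stands, the ``I would check directly'' is the missing piece.
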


\begin{proof}
If $S^{[2]}$ admits a non-trivial automorphism, by Lemma~\ref{lem:2class_exists} its action $\varphi$ 
on $\NS(S^{[2]})$ is the reflection through a class $(b,a)$ of square $2$. By Remark~\ref{rem:isometry2class} we have that
 $t$ is not a square. By Proposition \ref{prop:type_of_auts} we have that~$\varphi$ is given by the matrix
$$
\left( \begin{matrix}
A & B\\
-tB & -A
\end{matrix} \right) \quad \text{with } A^2 - t B^2 = 1, A > 0, B < 0,
$$
where $A$ and $B$ are determined by the ample cone $\cA_{S^{[2]}}$:
$$
\cA_{S^{[2]}} = \left\{ xh-y\delta\,|\,y  > 0,Ay  < -tBx \right\}.
$$
By Proposition~\ref{prop:nefcone} and Remark~\ref{rem: cone and t},
 Pell's equation $P_{4t}(5)$ has no solution and $(A,-B)$ is the minimal solution of Pell's equation~$P_t(1)$. 
Moreover, by Remark~\ref{rem:isometry2class} $(a,b)$ is a solution of Pell's equation $P_t(-1)$
and we have $A=2a^2+1$, $B=-2ab$. It is easy to check that the class $D=bh-a\delta\in\NS(S^{[2]})$ of square $2$
lives inside the ample cone $\cA_{S^{[2]}}$. This proves (\ref{item1}) and (\ref{item2}).

Assuming (\ref{item1}), by Proposition \ref{prop:nefcone} the ample cone of $S^{[2]}$ is
$$
\cA_{S^{[2]}} = \left\{ xh-y\delta\,|\,y > 0,Ay < -tBx\right\},
$$
where $(A, -B)$ is the minimal solution of $P_t(1)$. Let $(a, b)$ be the minimal solution of $P_t(-1)$.
By Lemma~\ref{lem:Pell} we have $A=2a^2+1$, $B=-2ab$ so again the class $D=bh-a\delta\in\NS(S^{[2]})$ of square $2$
lives inside the ample cone $\cA_{S^{[2]}}$. This proves~(\ref{item2}).

Assuming (\ref{item2}), write $D=bh-a\delta$. By Lemma~\ref{lem:extending_involutions} 
the reflection on $\NS(S^{[2]})$ in the span of $D$ extends to an isometry $\Phi$ of  $H^2(S^{[2]}, \IZ)$ 
such that $\Phi\left|_{\Trans(S^{[2]})}\right. = -\id$ so it induces a Hodge isometry
$\Phi_\IC$ on $H^2(S^{[2]}, \IC)$. Since $\Phi$ leaves invariant the ample class $D$,
this isometry maps the positive cone of $\NS(S^{[2]})$ to itself. By the global Torelli theorem of 
Markman--Verbitsky~\cite[Theorem~1.3, Lemma~9.2]{markman} 
there exists an automorphism $f\in\Aut(S^{[2]})$ such that $f^* = \Phi$, which is a non-symplectic involution
by Proposition~\ref{prop: nonsympl inv} and there is no other non-trivial automorphism on~$S^{[2]}$.

Let us show that the ample class $D$ of square $2$ is unique. Putting $D=bh-a\delta$, we know that $(a,b)$ is
the minimal solution of $P_t(-1)$ and that the ample cone of~$S^{[2]}$ is characterized by $A=2a^2+1$, $B=-2ab$
where $(A,-B)$ is the minimal solution of $P_t(1)$. Assume that $D'=\beta h-\alpha\delta$ is another ample class
of square $2$. Then $\alpha>0$, $\beta>0$ and $(\alpha,\beta)$ is a positive solution of $P_t(-1)$. Putting
$z=a+b\sqrt{t}$ and $w=\alpha+\beta\sqrt{t}$, by Remark~\ref{rem:solPell} we have $w=z^{2n+1}$ with $n\geq 0$.
Assume that $n>0$. Since $z_0=A+B\sqrt{t}=z^2$ we have $w=z^{2n-1}z_0$. Writing $z^{2n-1}=u+v\sqrt{t}$ we get
$$
\alpha=uA-tvB, \quad \beta=vA-uB.
$$
We deduce that $A\alpha +tB\beta = u(A^2-tB^2)=u>0$ so $A\alpha > -tB\beta$, this means that $D'$ is not ample.
So $n=0$ and $D'=D$.
\end{proof}

\section{Examples}\label{s:geometric}

Using the results of Section~\ref{ss:Pell} we find that Pell's equation $P_t(-1)$ has a solution for $t=2,5,10,13,17,\ldots$
and using the software Magma~\cite{Magma}
we find that Pell's equation $P_{4t}(5)$ has no solution for $t=2,10,13,17$, hence by 
Theorem~\ref{thm: main thm} in these cases only~$S^{[2]}$ admits a non-trivial automorphism which is a non-symplectic
involution whose action on $H^2(S,\IZ)$ is the reflection in the span of an ample class~$D$ of square $2$.
By the Hirzebruch--Riemann--Roch theorem we have (see for instance \cite[Section~4]{ogrady2}):
$$
\chi(nD)=\frac{1}{2}n^4+\frac{5}{2}n^2+3.
$$
In particular by Kodaira vanishing theorem $h^0(S^{[2]},D)=6$ so the linear system~$|D|$ defines a rational map 
$$
\varphi_{|D|}\colon S^{[2]}\dasharrow \IP^5.
$$
The non-symplectic involution $\iota$ acts on $H^0(S^{[2]},D)$ and $\phi_{|D|}$ is $\iota$-equivariant.

\subsection{The case $t=2$}

Here $S$ is polarized by an ample class $H$ of square $4$ which is very ample and embeds $S$ as a generic quartic in $\IP^3$. The minimal
solution of Pell's equation $P_2(-1)$ is $(1,1)$ so the non-trivial automorphism acts on $\NS(S^{[2]})$ as the isometry
 given by the matrix
$$
\left(\begin{matrix}
3 & -2\\
4 & -3 
\end{matrix}\right)
$$
which is the reflection in the span of the ample class $D=h-\delta$ of square $2$.
Moreover the ample cone of $S^{[2]}$ is given by $A=3, B=-2$:
$$
\cA_{S^{[2]}} = \left\{ xh-y\delta\,|\,y > 0,3y < 4x\right\}.
$$

By the embedding $S\hookrightarrow \IP (H^0(S,H)^\ast)\cong \IP^3$ we identify the map to
the Grassmannian $\phi\colon S^{[2]}\to \Grass\pa{2, H^0(S, H)^\ast}$ used in Section~\ref{ss:grass}  
 with the map 
$$
\phi\colon S^{[2]}\to \Grass(1,\IP^3),\quad Z\mapsto \langle Z\rangle
$$
that maps $Z\in S^{[2]}$ to the one-dimensional span $\langle Z\rangle$ of $Z$ in $\IP^3$.
Since $S$ has Picard number one it contains no line. Any line in $\IP^3$ intersects $S$ in $4$ points (with multiplicity) so the map $\phi$ is generically $6:1$. We denote by $\psi\colon\Grass(1,\IP^3)\to \IP^5$
the Pl\"ucker embedding, its image $Y$ is the Pl\"ucker quadric. 

By Beauville~\cite{beauville2} the rational map
$\iota\colon S^{[2]}\to S^{[2]}$ that sends $Z$ to the length-two subscheme $Z'$ defined by $\langle Z\rangle\cap S=Z\coprod Z'$
is everywhere defined and we have a commutative diagram
$$
\xymatrix{
S^{[2]}\ar[r]^-\phi\ar[d]_\iota &  \Grass(1,\IP^3)\ar[r]^-\psi& Y\\S^{[2]}\ar[ur]_\phi}
$$
Denoting by $L\coloneqq \psi^\ast \cO_Y(1)$ the very ample line bundle given by the Pl\"ucker embedding 
we get $\iota^\ast\phi^\ast L=\phi^\ast L$ so $\phi^\ast L$ is a multiple of the invariant class $D$.
As explained for instance in \cite[Section~4.1.2]{ogrady1} it is $\phi^\ast L=D$ and $\iota^\ast$ is indeed 
the non-symplectic involution given by the matrix above. The composite map 
$$
f\coloneqq\psi\circ \phi \colon S^{[2]}\to Y\subset \IP^5
$$
is such that $f^\ast \cO_{\IP^5}(1)=D$. Since $H^0(S^{[2]},D)$ has dimension $6$ and since~$Y$ is not contained
in any hyperplane of $\IP^5$ we have $f=\varphi_{|D|}$. In particular
we see in this case that the linear system $|D|$ is base-point-free, the involution $\iota$ acts trivially on
$H^0(S^{[2]},D)$ and $\varphi_{|D|}$ is $\iota$-invariant.

\subsection{The case $t=10$}

Here $S$ is polarized by an ample class of square $20$. The minimal
solution of Pell's equation $P_{10}(-1)$ is $(3,1)$ so the non-trivial automorphism acts on $\NS(S^{[2]})$ as the isometry
 given by the matrix
$$
\left(\begin{matrix}
19 & -6\\
60 & -19 
\end{matrix}\right)
$$
which is the reflection in the span of the ample class $D=h-3\delta$ of square $2$.
Moreover the ample cone of $S^{[2]}$ is given by $A=19, B=-6$:
$$
\cA_{S^{[2]}} = \left\{ xh-y\delta\,|\,y > 0,19y < 60x \right\}.
$$
To our knowledge, there exists no geometric construction of this automorphism in the literature. In particular, it is not known whether the linear system $|D|$ is base-point-free.

\bibliographystyle{amsplain}
\bibliography{Biblio_BCNS_Hilb2K3}

\end{document}